\newcommand{\be}{\begin{equation}}
\newcommand{\ee}{\end{equation}}
\newcommand{\beq}{\begin{eqnarray}}
\newcommand{\eeq}{\end{eqnarray}}
\newtheorem{thm}{Theorem}[section]
\newtheorem{lma}{Lemma}[section]
\newtheorem{prop}{Proposition}[section]
\newtheorem{cor}{Corollary}[section]
\newtheorem{defn}{Definition}[section]
\theoremstyle{remark}
\newtheorem{rem}{Remark}[section]
\numberwithin{equation}{section}
\def\be{\begin{equation}}
\def\ee{\end{equation}}
\def\bee{\begin{equation*}}
\def\eee{\end{equation*}}
\def\lf{\left}
\def\ri{\right}
\def\K{K\"ahler }
\def\KR{K\"ahler-Ricci }
\def\Ric{\text{\rm Ric}}
\def\Rm{\text{\rm Rm}}
\def\p{\partial}
\def\rheat{\lf(\frac{\p}{\p t}-\Delta_{g(t)}\ri)}
\def\e{\varepsilon}
\def\a{{\alpha}}
\def\b{{\beta}}
\def\C{\mathbb{C}}
\begin{document}

\title[]
{Ricci flow under Kato-type curvature lower bound}

\author{Man-Chun Lee}
\address[Man-Chun Lee]{Department of Mathematics, The Chinese University of Hong Kong, Shatin, N.T., Hong Kong
}
\email{mclee@math.cuhk.edu.hk}

\renewcommand{\subjclassname}{
  \textup{2010} Mathematics Subject Classification}
\subjclass[2010]{Primary 53C44
}

\date{\today}

\begin{abstract}
In this work, we extend the existence theory of non-collapsed Ricci flows from point-wise curvature lower bound to Kato-type curvature lower bound.  As an application, we prove that compact three dimensional non-collapsed strong Kato limit space is homeomorphic to a smooth manifold.  Moreover, similar result also holds in higher dimension under stronger curvature condition. We also use the Ricci flow smoothing to study stability problem in scalar curvature geometry.
\end{abstract}

\keywords{Kato-type lower bound, Ricci flow smoothing, stability}

\maketitle

\markboth{Man-Chun Lee}{}

\section{Introduction}

There is a long history on the study of compactness of sets of manifolds with uniform lower bound of Ricci curvature.  For instance, the Gromov compactness Theorem \cite{Gromov1999} states that sequence of compact manifolds with uniform Ricci lower bound and diameter upper bound admits a convergent subsequence in the Gromov-Hausdorff topology. Since then,  there have been  many studies toward understanding the structure of the metric space arising as limits of smooth manifolds, see \cite{CheegerColding1997,CheegerColding2000,CheegerColding2000-2,
CheegerNaber2015,CheegerJiangNaber2021,Colding1997} and the reference therein. The analytic properties such as comparison geometry and heat kernel estimates on manifolds with Ricci lower bound play an important role there.

On the other hand, there are however many interesting scenarios in which uniform Ricci lower bound is missing, especially in the study of Ricci flow. It is then natural and important to consider the case when a uniform Ricci lower bound is further relaxed to bound in a weaker sense.  In \cite{PetersenWei1997},  Petersen and Wei generalized the classical fundamental Laplacian and volume comparison for uniform Ricci curvature lower bounds to  smallness assumptions of $L^p$ norm of $\Ric_-$ for $p>n/2$  where $\Ric_-(x)=\inf\{ \a\geq 0: \Ric(g(x))+\a g(x)\geq 0\}$.  This opens the door to understanding the structure of  limit spaces under small $||\Ric_-||_p$ assumption for $p>n/2$, see also \cite{DaiWeiZhang2018,MaWang2021}.  

More recently,  it was further generalized by Carron \cite{Carron2019} and Rose \cite{Rose2019} that in the compact case, a Dynkin-type lower bound of Ricci curvature also suffices to obtain the Li-Yau estimates for the heat kernel, building on the idea of Zhang-Zhu \cite{ZhangZhu2018}.  For a complete Riemannian manifold  $(M^n,g_0)$ of dimension $n\geq 2$,   we denote $$\kappa_{t}(M^n,g_0)=\sup_{x\in M} \int^t_0 \int_M H_{g_0}(x,y,s)(\Ric)_-(y)\, d\mathrm{vol}_{g_0}(y) ds$$
where $H_{g_0}$ is the heat kernel, that is $H(\cdot,\cdot,t)$ is the kernel of the operator $e^{-t\Delta_{g_0}}$ for any $t>0$.

\begin{defn}
Let $\{(M_i,g_i)\}_{i=1}^\infty$ be a sequence of compact manifolds\footnote{
We might analogously define the notions on complete non-compact case.  }. We say that $\{(M_i,g_i)\}_{i=1}^\infty$ satisfies
\begin{itemize}
\item a uniform Dynkin bound on $\Ric_-$ if there exists $T>0$ such that for all $i\in \mathbb{N}$, 
$$\kappa_T(M_i,g_i)\leq \frac1{16n};$$
\item  a uniform Kato bound  on $\Ric_-$ if there exist a non-decreasing function  $f:(0,T]\to (0,+\infty)$, $T>0$ such that $f(t)\to 0$ as $t\to 0^+$ and for all $t\in (0,T]$ and $i\in \mathbb{N}$,
$$\kappa_t(M_i,g_i)\leq f(t);$$
\item a strong uniform Kato bound  on $\Ric_-$ if there exist a non-decreasing function  $f:(0,T]\to (0,+\infty)$, $T,\Lambda>0$ such that $f(t)\to 0$ as $t\to 0^+$ and for all $t\in (0,T]$ and $i\in \mathbb{N}$,
$$\kappa_t(M_i,g_i)\leq f(t);\quad \int^T_0\frac{\sqrt{f(s)}}{s} ds\leq \Lambda.$$
\end{itemize}
\end{defn}
When $\Ric\geq -K$,  it satisfies the uniform Dynkin for $T=(16nK)^{-1}$.  In the compact case, it was also shown that a strong uniform Kato bound  can be achieved for some small $T$ under suitable $L^p$ bound of $\Ric_-$  \cite{RoseStollmann2017} or suitable Morrey bound \cite{CarronRose2021}.  Perhaps more importantly, it was proved by Carron \cite{Carron2019} that the set of compact manifolds satisfying a uniform Dynkin bound on $\Ric_-$  is pre-compact in the Gromov-Hausdorff topology. When it is strengthened to strong uniform Kato sense,  the structure theory of the corresponding volume non-collapsed Gromov-Hausdorff limit has been developed by Carron-Mondello-Tewodrose \cite{CarronMondelloTewodrose2021}.  This generalizes the earlier works of Cheeger-Colding on Ricci limit spaces. We refer readers to \cite{CarronMondelloTewodrose2022,CarronMondelloTewodrose2023,Rose2021,
 RoseWei2022} for more interesting and important developments.

In this work, we are interested in the regularity of the Gromov-Hausdorff limits of volume non-collapsed compact manifolds (without boundary) under the strong Kato lower bound on $1$-isotropic curvature.  To clarify the notion, denote the space of algebraic curvature tensors on $\mathbb{R}^n$ by $\mathcal{C}_B(\mathbb{R}^n)$.  For any given $R\in \mathcal{C}_B(\mathbb{R}^n)$, we extend it complex linearly to $\mathbb{C}^n$.  We say that $R\in\mathrm{C}_{\mathrm{PIC2}}$ if for each two complex dimensional subspace $\Sigma$ of $\mathbb{C}^n$ and orthonormal basis $v,w\in \mathbb{C}^n$ of $\Sigma$, we have  $R(v,w,\bar v,\bar w)\geq 0$.  If one instead asks for non-negativity of complex sectional curvature only for PIC1 sections, defined to be those $\Sigma$ that contain some non-zero vector $v$ whose conjugate $\bar v$ is orthogonal to the entire section $\Sigma$. The algebraic curvature tensors $R$ with non-negative complex sectional curvature for each such restricted $\Sigma$ form a cone we denote by $\mathrm{C}_{\mathrm{PIC1}}$.  When $n=3$, it is known that $\mathrm{C}_{\mathrm{PIC1}}$ is also the cone of curvature tensors $R$ with $\Ric(R)\geq 0$.  

For $n\geq 4$,  one can equivalently describe $\mathrm{C}_{\mathrm{PIC1}}$ as follows (see \cite{BrendleBook}): $\mathrm{C}_{\mathrm{PIC1}}$ is the cone consisting of curvature tensors $R$ such that for any orthonormal four-frame $\{e_i\}_{i=1}^4\subset \mathbb{R}^n$ and $\lambda\in [0,1]$, we have
$$R_{1313}+\lambda^2 R_{1414}+R_{2323}+\lambda^2R_{2424}-2\lambda R_{1234}\geq 0.$$
We are interested in studying manifolds where the curvature is bounded from below with respect to $\mathrm{C}_{\mathrm{PIC1}}$.  For a Riemannian manifold $(M^n,g)$ with $n\geq 3$ and $x\in M$, we define
$$(\mathrm{IC}_{1})_-(x)=\inf\{ \a\geq 0: \mathrm{Rm}_{g(x)}+\a \mathrm{I}\in\mathrm{C}_{\mathrm{PIC1}}\}$$
where $\mathrm{I}$ is the curvature tensor defined by $I_{ijkl}=\delta_{ik}\delta_{jl}-\delta_{il}\delta_{jk}$. This is the negative part of the lowest eigenvalue of $\mathrm{Rm}$ with respect to $\mathrm{C}_{\mathrm{PIC1}}$. This generalizes the notion of $\Ric_-$ in the sense that  when $n=3$,  $\Ric_-=(n-1)\cdot (\mathrm{IC}_{1})_-$ while when $n\geq 4$,  we have $\Ric_-\leq (n-1)\cdot (\mathrm{IC}_{1})_-$.

We follow the spirit of $\kappa_t$ and define, for all $t>0$,
$$\kappa_{t,\mathrm{IC1}}(M^n,g_0)=\sup_{x\in M} \int^t_0 \int_M H_{g_0}(x,y,s)(\mathrm{IC}_{1})_-(y)\, d\mathrm{vol}_{g_0}(y) ds.$$

\begin{defn}Suppose $T>0$ and $f:(0,T]\to (0,+\infty)$ is a non-decreasing function such that 
\begin{equation}\label{K-condition}
\lim_{t\to 0}f(t)=0 ,\quad\text{and}\quad f(T)\leq \frac1{16n(n-1)}.
\end{equation}
We say that $(M^n,g)\in \mathcal{K}_{\mathrm{IC1}}(n,f)$ if for all $t\in (0,T]$ we have
$$\kappa_t(M^n,g_0)\leq f(t).$$
If in addition,  $ \mathrm{Vol}_g\left(B_g(x_0,\sqrt{T})\right)\geq v T^{n/2}$ for some $x_0\in M,v>0$, then we say that $(M^n,g,x_0)\in  \mathcal{K}_{\mathrm{IC1}}(n,f,v)$.
\end{defn}

We will mainly focus on the case when $M$ is compact and the Kato-bound is uniformly strong.  Given a sequence of compact manifolds $\{(M_i,g_i)\}_{i=1}^\infty$, we say that $\{(M_i,g_i)\}_{i=1}^\infty$ satisfies a strong uniform $\mathrm{IC1}$-Kato if $(M_i,g_i)\in \mathcal{K}_{\mathrm{IC1}}(n,f)$ where $f:(0,T]\to (0,+\infty)$  is a non-decreasing function, $T,\Lambda>0$ such that $f(t)\to 0$ as $t\to 0^+$ and satisfies
\begin{equation}\label{intro:SK-condition}
\int^T_0 \frac{\sqrt{f(s)}}{s}\, ds \leq\Lambda.
\end{equation}
%Moreover,  for all $t\in (0,T]$ and $i\in \mathbb{N}$,
%\begin{equation}\label{intr-SK}
%\kappa_{t,\mathrm{IC1}}(M_i^n,g_i)\leq f(t).
%\end{equation}
When $n=3$, it coincides with the concept of strong Kato bound of $\Ric_-$ considered in \cite{CarronMondelloTewodrose2021}, modulus scaling\footnote{there will be a  normalization constant $n-1$ since the positivity of $\mathrm{IC}_1$ is defined using language of cone.}.

%If $(M,g)$ is a compact manifold such that $\kappa_{t,\mathrm{IC1}}(M^n,g)\leq f(t)$, then we say that $(M,g)\in \mathcal{K}_{\mathrm{IC1}}(n,f)$.  For $v>0$, we say that a pointed compact manifold $(M^n,g,x_0)\in  \mathcal{K}_{\mathrm{IC1}}(n,f,v)$ if 
%$$(M^n,g)\in  \mathcal{K}_{\mathrm{IC1}}(n,f)\quad\text{and}\quad \mathrm{Vol}_g\left(B_g(x_0,\sqrt{T})\right)\geq v T^{n/2}.$$

%
%To fix the notations,  we  fix a barrier for the uniform strong Kato control.  Assume $T>0$ and $f:(0,T]\to (0,+\infty)$ such that $f$ is non-decreasing and satisfies
%\begin{equation}\label{K-condition}
%\lim_{t\to 0}f(t)=0 ,\quad\text{and}\quad f(T)\leq \frac1{16n(n-1)}.
%\end{equation}
%We say that $(M^n,g)\in \mathcal{K}_{\mathrm{IC1}}(n,f)$ if for all $t\in (0,T]$ we have
%$$\kappa_t(M^n,g_0)\leq f(t).$$
%
%A sequence of compact manifold $\{ (M^n_i,g_i)\}_{i=1}^\infty$ is said to be uniformly strong $\mathrm{IC1}$-Kato if there exist $T,\Lambda$ and a non-decreasing function $f:(0,T]\to (0,+\infty)$ satisfying 
%\begin{equation}\label{intro:SK-condition}
%\int^T_0 \frac{\sqrt{f(s)}}{s}\, ds \leq\Lambda
%\end{equation}
%such that for all $t\in (0,T]$ and $i\in \mathbb{N}$,
%\begin{equation}\label{intr-SK}
%\kappa_{t,\mathrm{IC1}}(M_i^n,g_i)\leq f(t).
%\end{equation}
%For $v>0$, we say that a pointed compact manifold $(M^n,g,x_0)\in  \mathcal{K}_{\mathrm{IC1}}(n,f,v)$ if 
%$$(M^n,g)\in  \mathcal{K}_{\mathrm{IC1}}(n,f)\quad\text{and}\quad \mathrm{Vol}_g\left(B_g(x_0,\sqrt{T})\right)\geq v T^{n/2}.$$
%

In contrast with the work \cite{CarronMondelloTewodrose2021,CarronMondelloTewodrose2022}, we intend to strengthen the curvature lower bound from Ricci curvature to $1$-isotropic curvature (which two notions coincide when $n=3$).  This is largely motivated by the works of Bamler-Cabezas-Wilking \cite{BamlerCabezasWilking2019},  Simon-Topping \cite{SimonTopping2021}, Lai \cite{Lai2019} and Lee-Tam \cite{LeeTam2021} on obtaining the regularity of the Gromov-Hausdorff limit using Ricci flow.

The following is the main technical result which says that such manifolds can be regularized  by the Ricci flow uniformly.  This generalizes the earlier work of Bamler-Cabezas-Wilking \cite{BamlerCabezasWilking2019} on the theory of Ricci flows with curvature bounded from below in point-wise sense initially.
\begin{thm}\label{thm:RF-compact}
Suppose $(M^n,g_0,x_0),n\geq 3$ is a pointed compact manifold which is inside $\mathcal{K}_{\mathrm{IC1}}(n,f,v)$ for some non-decreasing function $f$, $T$ and $\Lambda$ satisfying \eqref{intro:SK-condition} and $\mathrm{diam}(M,g_0)\leq D$ for some $D>0$, then there exist $S,\a,\tilde v$ depending only on $n,f,v,D,T,\Lambda$ and a solution $g(t),t\in [0,S]$ to the Ricci flow such that for all $(x,t)\in M\times (0,S]$,
\begin{enumerate}
\item[(a)] $|\Rm(x,t)|\leq \a t^{-1}$;
\item [(b)] $\mathrm{Vol}_{g(t)}\left(B_{g(t)}(x, \sqrt{T})\right)\geq \tilde v T^{n/2}$;
\item[(c)]$\mathrm{inj}(g(x,t))\geq \sqrt{\a^{-1}t}$;
\item[(d)] $\mathrm{Rm}(g(x,t))+\a (1+t^{-1}f(\a t))\cdot \mathrm{I} \in \mathrm{C}_{\mathrm{PIC1}}$.
\end{enumerate}
\end{thm}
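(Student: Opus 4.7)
The overall plan is to run a continuity/approximation argument in the spirit of Bamler--Cabezas-Wilking and Lee--Tam, replacing their pointwise lower bound on $\mathrm{Rm}$ by a Kato-type integral control. First I would approximate $g_0$ by a family $g_0^\eta$ (for instance by running the Ricci--DeTurck flow for a tiny time, or by a mollification) so that each $g_0^\eta$ has bounded curvature. Short-time Ricci flow $g_\eta(t)$ then exists by Hamilton's theorem, and the goal is to establish the estimates (a)--(d) with constants depending only on $n,f,v,D,T,\Lambda$ and \emph{not} on $\eta$; the theorem then follows by a Hamilton--Cheeger--Gromov compactness argument, letting $\eta\to 0$.

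The heart of the argument is the evolution of the negative part of the $\mathrm{IC}_1$-curvature. Since $\mathrm{C}_{\mathrm{PIC1}}$ is invariant under the Hamilton ODE (Brendle, Nguyen, Wilking) the function $u(x,t)=(\mathrm{IC}_1)_-(x,t)$ satisfies, in the barrier sense,
\[
\left(\frac{\partial}{\partial t}-\Delta_{g(t)}\right)u \;\leq\; C(n)\,\bigl(u^2+|\mathrm{Rm}|\cdot u\bigr).
\]
Convolving this inequality with the conjugate heat kernel $K(x,\cdot;t,\cdot)$ of the evolving metric (and absorbing the quadratic term into the smallness of $\kappa$) I would show the reverse Kato-type bound
\[
\int_0^t\!\!\int_M K(x,t;y,s)\,u(y,s)\,d\mathrm{vol}_{g(s)}(y)\,ds \;\leq\; C f(C t),
\]
provided the flow already satisfies an a priori bound $|\mathrm{Rm}|\le \a/s$ on $[0,t]$. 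The key coupling between initial Kato bound and the flow Kato bound uses Perelman's reduced distance (to compare $K$ to the heat kernel of $g_0$) together with pseudolocality, which is available in this setting thanks to the non-collapsing in hypothesis (b) and the Li--Yau/Gaussian estimate of Carron--Rose under the strong Kato condition \eqref{intro:SK-condition}.

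Once this integral control of $u$ is established, a parabolic mean-value/Moser iteration (exactly as in Bamler--Cabezas-Wilking, adapted to the strong Kato regime via the Dini integral $\int_0^T \sqrt{f(s)}/s\,ds\le\Lambda$) converts it into the pointwise smoothing
\[
u(x,t) \;\leq\; \a\bigl(1+t^{-1}f(\a t)\bigr),
\]
which is exactly conclusion (d). Combined with the standard fact that under Ricci flow a bound on $u$ together with a \textit{small} $\kappa$ gives a two-sided curvature bound via Shi-type derivative estimates, this yields conclusion (a). The continuity method then closes the loop: on the maximal interval $[0,T^\ast)$ where (a)--(d) hold with doubled constants, the refined estimates produced above show the constants can be undoubled, so $T^\ast=S$.

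The remaining conclusions (b) and (c) follow from the preceding bounds by fairly standard means, which I expect to be routine once the curvature smoothing is in hand. For (b) I would use Perelman's $\mathcal{W}$-functional monotonicity, or a direct comparison via heat kernel lower bounds: the integrated Kato bound and (a) imply a uniform Sobolev inequality along the flow on small parabolic balls, and the initial volume assumption then propagates forward with a controlled loss. For (c) one applies Cheeger--Gromov--Taylor to (a) and (b). The main obstacle I anticipate is the step coupling the initial-metric Kato integral $\kappa_t(M,g_0)$ to the evolving-metric version along $g(t)$; controlling how the heat kernel distorts under a Ricci flow whose curvature is only bounded by $\a/t$ (not uniformly) is the delicate point, and is where the strong Kato hypothesis \eqref{intro:SK-condition} (in the form of a finite Dini integral) is essential to sum up the errors over dyadic time intervals $[2^{-k-1}T,2^{-k}T]$.
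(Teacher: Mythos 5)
Your overall skeleton --- a continuity argument in which the Kato bound on $(\mathrm{IC}_1)_-$ is propagated forward by convolving the evolution inequality against a heat kernel adapted to the flow, with the Dini integral $\int_0^T\sqrt{f(s)}/s\,ds\le\Lambda$ summing the errors --- matches the paper's strategy (which implements this via the Lee--Tam local maximum principle with a Dirichlet heat kernel for $\partial_t-\Delta_{g(t)}-\mathcal{R}$ under the a priori bound $|\Rm|\le\a/t$; note that the correct reaction term is $\mathcal{R}\varphi+c_n\varphi^2$, not $|\Rm|\cdot\varphi$, and this matters because a term $\a\varphi/t$ could not be absorbed by Gronwall). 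Also, since $M$ is compact and $g_0$ smooth, no mollification of the initial data is needed; Hamilton's theorem applies directly.

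The genuine gap is your derivation of the curvature upper bound (a). You assert that the pointwise smoothing estimate (d) ``combined with the standard fact that \ldots a bound on $u$ together with a small $\kappa$ gives a two-sided curvature bound via Shi-type derivative estimates'' yields (a). No such fact exists: an almost-lower bound on curvature in the $\mathrm{C}_{\mathrm{PIC1}}$ sense gives no upper bound on $|\Rm|$ by parabolic regularity alone (consider a neck pinch, which has nonnegative curvature). The actual mechanism --- and the reason the hypothesis is on $\mathrm{IC}_1$ rather than merely on Ricci when $n\ge4$ --- is a Perelman-type point-picking and blow-up argument: the $L^1$-in-time lower bound (d) first yields volume non-collapsing at scale $\sqrt{t}$ (this is the content of the paper's Propositions 4.1--4.2, following Simon--Topping, and is itself not routine, contrary to your treatment of (b) as an afterthought; note it must be established \emph{before} (a), not after), and then a rescaling limit at a hypothetical curvature blow-up produces a nonflat ancient solution with $\Rm\in\mathrm{C}_{\mathrm{PIC1}}$, bounded curvature and Euclidean volume growth, contradicting the classification of Bamler--Cabezas-Rivas--Wilking. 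Without this step your continuity argument cannot close, since (a) is exactly the estimate whose constant must be ``undoubled.''
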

A similar result also holds in the \K case under strong uniform Kato lower bound on bisectional curvature,  see Theorem~\ref{thm:kahler-case}.

\begin{rem}
Using the partial Ricci flow approach in \cite{Lai2019} (see also \cite{SimonTopping2021,LeeTam2021}), the result also holds in the complete non-compact case \textit{without} bounded curvature assumption if suitable \textit{static} heat kernel estimates hold.  In the presence of uniform Ricci lower bound, this usually follows from Li-Yau estimates.  In the compact case,  it has been obtained by Carron \cite{Carron2019} and Rose \cite{Rose2019} under the Kato-type lower bound of Ricci curvature. The comparison geometry in the complete non-compact case seems out of reach at the moment due to the absence of exhaustion function with good control.  We will therefore only focus on the compact case in this work.
\end{rem}

As an application,  we prove that the Gromov-Hausdorff limit of sequence of compact manifolds satisfying the strong uniform $\mathrm{IC1}$-Kato bound and a uniform diameter upper bound is homeomorphic to a smooth manifold. 
\begin{cor}\label{cor:RF-fromSing}Suppose $(M^n_i,g_{i,0})$ is a sequence of compact manifold such that 
\begin{enumerate}
\item[(a)] $(M_i,g_{i,0},x_i)\in \mathcal{K}_{\mathrm{IC1}}(n,f,v)$ for some $x_i\in M_i, v>0$ and a non-decreasing function $f$ satisfying \eqref{intro:SK-condition};
\item[(b)]$\mathrm{diam}(M_i,g_{i,0})\leq D$ for some $D>0$.
\end{enumerate}
Then there exist a smooth compact manifold $M_\infty$ and a distance function $d_\infty$ on $M_\infty$ so that $(M_i,g_{i,0})$ sub-converges to $(M_\infty,d_\infty)$ in the measured Gromov-Hausdorff sense.  Moreover, there exists a Ricci flow $g_\infty(t),t\in (0,S]$ on $M_\infty$ such that $$\lim_{t\to 0} d_{GH}\left( (M_\infty,d_\infty), (M_\infty,g_\infty(t)\right)=0.$$

Furthermore, if in addition $(M_i,g_{i,0},x_i)\in \mathcal{K}_{IC1}(n,f_i,v)$ for some decreasing function $f_i$ in which $f_i(T)\to 0$, then $\mathrm{Rm}(g_\infty(t))\in \mathrm{C}_{\mathrm{PIC1}}$.
\end{cor}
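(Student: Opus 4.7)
The plan is to apply Theorem~\ref{thm:RF-compact} uniformly, extract a smooth limit Ricci flow via Hamilton's compactness theorem, and identify its $t\to 0^+$ behaviour with the Gromov--Hausdorff limit of the initial data. First, I would apply Theorem~\ref{thm:RF-compact} to each $(M_i,g_{i,0},x_i)$ to obtain Ricci flows $g_i(t)$, $t\in[0,S]$, satisfying $|\Rm(g_i(t))|\leq \alpha t^{-1}$, $\mathrm{inj}(g_i(\cdot,t))\geq\sqrt{\alpha^{-1}t}$, and the uniform volume non-collapsing of (b), with $S,\alpha,\tilde v$ independent of $i$. Integrating Hamilton's distance distortion estimate against the pyramid curvature bound produces a uniform diameter bound on $[0,S]$. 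Hence for every $\tau\in(0,S]$ the pointed manifolds $(M_i,g_i(\tau),x_i)$ have uniformly bounded curvature, positive injectivity radius, and bounded diameter, so Hamilton's compactness for Ricci flows yields a subsequential smooth Cheeger--Gromov limit Ricci flow $(M_\infty,g_\infty(t),x_\infty)$, $t\in(0,S]$, on a compact smooth manifold $M_\infty$, inheriting the same uniform estimates.

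The heart of the argument is a uniform distance distortion estimate
\[
|d_{g_i(t)}(y,z) - d_{g_{i,0}}(y,z)| \leq \eta(t),\qquad \eta(t)\to 0 \text{ as } t\to 0^+,
\]
independent of $i$, which I would obtain via the Simon--Topping Shrinking/Expanding Balls lemmas from the pyramid curvature bound (a) together with the volume non-collapsing (b). Coupled with the uniform diameter bound, this yields uniform total-boundedness, so Gromov's precompactness produces a subsequential measured Gromov--Hausdorff limit $(\widetilde M,d_\infty,\mathfrak m_\infty)$ of $(M_i,g_{i,0})$, and applied to the limit flow itself shows that $d_{g_\infty(t)}$ converges uniformly as $t\to 0^+$ to some distance $d_\infty'$ on $M_\infty$. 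Passing to the limit in the Cheeger--Gromov approximation maps at a sequence of times $\tau_j\to 0$, combined with a diagonal extraction, identifies $(\widetilde M,d_\infty)$ with $(M_\infty,d_\infty')$ isometrically. Since $d_\infty'$ is biLipschitz to the smooth Riemannian distance $d_{g_\infty(t)}$ at any fixed small $t>0$, it induces the manifold topology on $M_\infty$, so $(\widetilde M,d_\infty)$ is homeomorphic to the smooth manifold $M_\infty$ and $d_{GH}\bigl((M_\infty,d_\infty),(M_\infty,g_\infty(t))\bigr)\to 0$ by construction.

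For the final assertion, applying Theorem~\ref{thm:RF-compact} with a uniform non-decreasing envelope of the $f_i$ keeps $\alpha$ independent of $i$, so conclusion (d) gives
\[
\Rm(g_i(x,t))+\alpha\bigl(1+t^{-1}f_i(\alpha t)\bigr)\mathrm{I}\in\mathrm{C}_{\mathrm{PIC1}}.
\]
For each fixed $t>0$, $f_i(\alpha t)\leq f_i(T)\to 0$, so the Cheeger--Gromov limit satisfies $\Rm(g_\infty(t))+\alpha\mathrm{I}\in\mathrm{C}_{\mathrm{PIC1}}$; the residual slack is then removed by Hamilton's ODE--PDE maximum principle for the PIC1 cone, using the vanishing of the initial Kato excess to close a scaling/bootstrap argument and deliver $\Rm(g_\infty(t))\in\mathrm{C}_{\mathrm{PIC1}}$ for all $t\in(0,S]$. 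The main obstacle throughout is the uniform distance distortion estimate under the degenerate bound $|\Rm|\leq\alpha/t$: this is precisely where the volume non-collapsing of (b) is indispensable, since without it distances could degenerate and the smooth structure of $M_\infty$ could not be transferred to the Gromov--Hausdorff limit.
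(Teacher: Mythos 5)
Your overall architecture (flow each $g_{i,0}$ via Theorem~\ref{thm:RF-compact}, extract a limit flow by Hamilton compactness, identify the $t\to 0$ limit of $d_{g_\infty(t)}$ with the Gromov--Hausdorff limit of the initial metrics) is the paper's, but two steps have genuine gaps.

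First, the two-sided distance distortion. You attribute $|d_{g_i(t)}-d_{g_{i,0}}|\leq\eta(t)$ to ``the pyramid curvature bound (a) together with the volume non-collapsing (b)'', and in your last paragraph you assert that non-collapsing is the indispensable ingredient here. That is a misdiagnosis: $|\Rm|\leq\alpha t^{-1}$ controls only the shrinking direction ($d_{g_i(t)}\geq d_{g_{i,0}}-C\sqrt{t}$, Lemma~\ref{lma:DistanceD}(b)); the expanding direction requires a Ricci lower bound that is \emph{integrable in time}, and the bound $\Ric\geq-(n-1)\alpha t^{-1}$ implied by (a) is not integrable. The missing ingredient is conclusion (d) of Theorem~\ref{thm:RF-compact}: combined with \eqref{intro:SK-condition}, which forces $f(t)\leq\Lambda^2(\log(T/t))^{-2}$, it gives $\Ric(g_i(t))\geq-(n-1)\phi(t)$ with $\phi\in L^1((0,S])$ uniformly in $i$, and then Lemma~\ref{lma:DistanceD}(a) yields $d_{g_i(t)}\leq\exp\bigl(\int_s^t\phi\bigr)\,d_{g_i(s)}$. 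Volume non-collapsing plays no role in this estimate.

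Second, the final assertion. Passing $f_i(T)\to 0$ through conclusion (d) only yields $\Rm(g_\infty(t))+\alpha\,\mathrm{I}\in\mathrm{C}_{\mathrm{PIC1}}$ with $\alpha$ a fixed positive constant, and your proposal to remove the residual slack by ``Hamilton's ODE--PDE maximum principle\dots using the vanishing of the initial Kato excess'' does not close: the limit flow exists only for $t\in(0,S]$, there is no initial slice on which the cone condition holds, and a maximum principle started at time $s>0$ from the bound $(\mathrm{IC}_1)_-(g_\infty(s))\leq\alpha$ cannot improve that bound --- you would need $(\mathrm{IC}_1)_-(g_\infty(s))\to 0$ as $s\to 0$, which is precisely what is to be proved. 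The paper's mechanism is different: it re-applies the local maximum principle, Proposition~\ref{prop:MP}, on balls $B_{g_{i,0}}(x_i,r)$ with $r\geq D$ (hence equal to all of $M_i$), extending $f_i$ to $[T,r^2]$ by $f_i(t)=t\,e^{C_1D^2(T^{-1}-t^{-1})}f_i(T)/T$ using the diameter bound so that hypothesis (iii) of Proposition~\ref{prop:MP} holds at scale $r$ for $i$ large. The conclusion then reads $(\mathrm{IC}_1)_-(g_i(\cdot,t))\leq\tilde\Lambda\bigl(f_i(\tilde\Lambda t)/t+r^{-2}\bigr)$, and letting $i\to\infty$ followed by $r\to\infty$ kills both terms. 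The $r^{-2}$ error term in Proposition~\ref{prop:MP}, together with the observation that the diameter bound lets $r$ be taken arbitrarily large, is the idea your write-up is missing.
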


The final part of the result also holds if $\mathrm{C}_{\mathrm{PIC1}}$ is replaced by other curvature cone in Lemma~\ref{lma:curvSatisfyingProp}. Moreover, it particularly infers that if a compact manifold supports a metric with sufficiently small negative lower bound of $\mathrm{IC}_1$ in the strong Kato sense relative to $n,D,v$, then it also admits a smooth metric whose curvature operator lies in $\mathrm{C}_{\mathrm{PIC1}}$ via a contradiction argument.

Another application is to understand the stability of metrics on torus with almost non-negative scalar curvature.  
When $M=\mathbb{T}^n$, the celebrated work of Schoen and Yau \cite{SchoenYau1979,SchoenYau1979-2}, Gromov and Lawson \cite{GromovLawson1980} stated that metrics with non-negative scalar curvature $\mathcal{R}\geq 0$ must be flat. In \cite{Gromov2014}, Gromov asked if a sequence of metrics $g_i$ with $\mathcal{R}(g_i)\geq -i^{-1}$ on $\mathbb{T}^n$ will sub-converge to a flat metric in some appropriate weak sense.  For related works, we refer interested readers to the survey paper of Sormani \cite{Sormani2021} for a comprehensive discussion. In this regard, we show that if in addition the $1$-isotropic curvature is bounded from below in the strong Kato sense, then the sequence will sub-converge to the flat torus in the measured Gromov-Hausdorff sense. 
\begin{cor}\label{cor:LimitinPSC}
Suppose $(M^n_i,g_{i,0})$ is a sequence of compact manifolds such that
\begin{enumerate}
\item[(a)] $(M_i,g_{i,0},x_i)\in \mathcal{K}_{\mathrm{IC1}}(n,f,v)$ for some $x_i\in M_i, v>0$ and a non-decreasing function $f$ satisfying \eqref{SK-condition};
\item[(b)]$\mathrm{diam}(M_i,g_{i,0})\leq D$ for some $D>0$;
\item[(c)] $\sigma(M_i)\leq 0$ (equivalently, $M_i$ does not admit PSC metric);
\item[(d)]$\mathcal{R}(g_i)\geq -i^{-1}$.
\end{enumerate}
Then after passing to subsequent, $(M_i^n,g_{i,0})$ converges to a compact Ricci flat manifold $(M_\infty,g_{\infty})$ in the measured Gromov-Hausdorff sense.  If in addition $b_1(M_i)=n$ for all $i$, then $(M_\infty,g_\infty)$ is a flat torus.
\end{cor}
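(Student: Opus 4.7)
The plan is to feed the scalar curvature hypotheses (c)--(d) into Corollary~\ref{cor:RF-fromSing}, extract a Ricci flat smooth limit via the scalar maximum principle together with the Kazdan--Warner PSC obstruction, and conclude the torus case with the Bochner formula. Concretely, I would first apply Corollary~\ref{cor:RF-fromSing} to produce, after passing to a subsequence, a smooth compact manifold $M_\infty$, a distance $d_\infty$, and a Ricci flow $g_\infty(t)$ on $M_\infty\times(0,S]$ with $(M_\infty,g_\infty(t))\to (M_\infty,d_\infty)$ as $t\to 0$. Using the uniform bounds $|\Rm(g_i(t))|\leq \alpha t^{-1}$ and $\mathrm{inj}(g_i(x,t))\geq\sqrt{\alpha^{-1}t}$ supplied by Theorem~\ref{thm:RF-compact}, Hamilton's compactness theorem upgrades the convergence to smooth sub-convergence $\varphi_i^*g_i(t)\to g_\infty(t)$ on compact subsets of $M_\infty\times(0,S]$ through diffeomorphisms $\varphi_i:M_\infty\to M_i$. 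In particular $M_i$ is diffeomorphic to $M_\infty$ for all large $i$, so by hypothesis (c) the Yamabe invariant satisfies $\sigma(M_\infty)=\sigma(M_i)\leq 0$, and $M_\infty$ admits no PSC metric.

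Next I would push the scalar curvature bound through the flow. The evolution $\partial_t\mathcal{R}=\Delta \mathcal{R}+2|\Ric|^2\geq \Delta\mathcal{R}$ together with the weak maximum principle and (d) gives $\mathcal{R}(g_i(t))\geq -i^{-1}$ for all $t\in[0,S]$. Passing to the smooth limit yields $\mathcal{R}(g_\infty(t))\geq 0$ on $M_\infty\times(0,S]$. Since $M_\infty$ admits no PSC metric, the Kazdan--Warner trichotomy (any closed manifold of dimension $\geq 3$ carrying a metric with $\mathcal{R}\geq 0$ and $\mathcal{R}\not\equiv 0$ admits a PSC metric in the same conformal class) forces $\mathcal{R}(g_\infty(t))\equiv 0$ for every $t\in(0,S]$. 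Plugging back into the scalar evolution equation gives $|\Ric(g_\infty(t))|\equiv 0$, so $g_\infty(t)$ is Ricci flat and hence independent of $t$; writing $g_\infty:=g_\infty(t)$, letting $t\to 0$ identifies $d_\infty=d_{g_\infty}$, and $(M_i,g_{i,0})\to (M_\infty,g_\infty)$ in the measured Gromov--Hausdorff sense with $(M_\infty,g_\infty)$ a compact Ricci flat manifold.

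For the torus case, the diffeomorphism $\varphi_i$ gives $b_1(M_\infty)=b_1(M_i)=n$. On the compact Ricci flat manifold $(M_\infty,g_\infty)$, Bochner's formula forces every harmonic $1$-form to be parallel, producing $n$ pointwise linearly independent parallel vector fields; these generate a transitive action of a compact abelian Lie group by isometries, which identifies $(M_\infty,g_\infty)$ with a flat torus. The main obstacle is the first paragraph: converting the raw measured Gromov--Hausdorff convergence supplied by Corollary~\ref{cor:RF-fromSing} into genuine smooth Ricci flow convergence with honest diffeomorphisms $\varphi_i:M_\infty\to M_i$, so that the topological invariants $\sigma$ and $b_1$ actually transfer and the scalar curvature bound of (d) passes cleanly to the limiting flow. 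Once that rigidity of the limit topology is in place, the PSC obstruction and the Bochner step are essentially routine.
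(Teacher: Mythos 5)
Your proposal is correct and follows essentially the same route as the paper: apply Corollary~\ref{cor:RF-fromSing}, transfer $\sigma(M_\infty)\leq 0$ and $b_1$ via the diffeomorphisms coming from smooth Cheeger--Gromov convergence of the flows, propagate $\mathcal{R}\geq -i^{-1}$ by the maximum principle to get $\mathcal{R}(g_\infty(t))\geq 0$, conclude Ricci flatness from the PSC obstruction, and finish the torus case with Bochner. The only cosmetic difference is that you invoke the Kazdan--Warner conformal argument where the paper cites the strong maximum principle along the flow; both are standard and equivalent here.
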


%In particular, when applying $M_i=\mathbb{T}^n$,  then $M_\infty$ is also diffeomorphic to $\mathbb{T}^n$ and hence $g_\infty$ must be the flat metric by splitting Theorem.

{\it Acknowledgement}: The author would like to thank C. Rose for his interest and D. Tewodrose for comments on improving the manuscript. The work was partially supported by Hong Kong RGC grant (Early Career Scheme) of Hong Kong No. 24304222, a direct grant of CUHK and a NSFC grant.

\section{the strong kato bound and implications}

In the following, compact manifolds are referred those without boundary. In this section, we will collect some important properties from \cite{CarronMondelloTewodrose2021}  for compact manifolds $(M^n,g)$ satisfying the strong Kato bound.  The properties will play crucial role in obtaining estimates of Ricci flows.

Since $\mathrm{C}_{\mathrm{PIC1}}\subseteq \mathrm{C}_{\Ric}$ for $n\geq 3$, it is clear that for all $t\in (0,+\infty)$, 
$$\kappa_{t}(M^n,g_0)\leq  (n-1)\kappa_{t,\mathrm{IC1}}(M^n,g_0).$$
 
 Assume $T>0$ and $f:(0,T]\to (0,+\infty)$ such that $f$ is non-decreasing and satisfies
\begin{equation}\label{weak-Kato}
\lim_{t\to 0}f(t)=0 ,\quad\text{and}\quad f(T)\leq \frac1{16n}.
\end{equation}
We say that $(M^n,g)\in \mathcal{K}(n,f)$ if for all $t\in (0,T]$ we have $\kappa_t(M^n,g)\leq f(t)$. Similarly, we define $ \mathcal{K}(n,f,v)$ to be the class of pointed compact manifolds $(M,g,x_0)$ such that the volume $ \mathrm{Vol}_g\left(B_g(x_0,\sqrt{T})\right)\geq v T^{n/2}$. The following Proposition says that the heat kernel will behave in a similar way as the Euclidean one under the Kato control. This is originated in \cite{Carron2019}
\begin{prop}{\cite[Proposition 2.6]{CarronMondelloTewodrose2022}}\label{prop:heat-Kato}
There exists $\gamma(n)\geq 1$ such that if $(M^n,g)\in \mathcal{K}(n,f)$, then for all $x,y\in M$ and $t\in (0,T)$,
$$\frac{\gamma^{-1}}{\mathrm{Vol}_g\left( B_g(x,r)\right)}\cdot \exp\left(-\frac{\gamma d^2(x,y)}{t} \right)\leq H_{g_0}(x,y,t)\leq \frac{\gamma}{\mathrm{Vol}_g\left( B_g(x,r)\right)}\cdot \exp\left(-\frac{d^2(x,y)}{5t} \right).$$
\end{prop}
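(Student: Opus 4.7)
The plan is to follow the strategy developed by Zhang--Zhu \cite{ZhangZhu2018} and refined by Carron \cite{Carron2019}, which treats $\Ric_-$ as a Schr\"odinger potential and exploits Feynman--Kac type comparisons in lieu of pointwise Ricci lower bounds. Throughout one should read $r=\sqrt{t}$ in the statement.

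First I would establish a semigroup domination for gradients. The Bochner identity gives $(\partial_t-\Delta)|\nabla P_t u|^2\leq 2\Ric_- |\nabla P_t u|^2$, and the parabolic maximum principle (or equivalently Bismut's commutation formula) yields the pointwise bound $|\nabla P_t u|(x)\leq P^{V}_{t}(|\nabla u|)(x)$, where $P^V_t$ is the Schr\"odinger semigroup with potential $V=-\Ric_-$. Under the hypothesis $\kappa_T(M,g)\leq f(T)\leq 1/(16n)$, a Khasminskii-type iteration bounds $\|P^V_t \mathbf{1}\|_\infty$ by a constant depending only on $n$ for all $t\in(0,T)$; this replaces the classical Li--Yau gradient estimate in the absence of a pointwise Ricci bound.

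Second, I would combine the semigroup domination with a weak volume doubling property, itself recovered from the Kato condition by an integrated version of the Laplacian comparison theorem in the spirit of Petersen--Wei \cite{PetersenWei1997}. Together these yield a local Nash inequality, from which a Nash--Moser iteration produces the on-diagonal bound $H_g(x,x,t)\leq \gamma/\mathrm{Vol}_g(B_g(x,\sqrt t))$ for some $\gamma=\gamma(n)$. The off-diagonal Gaussian upper bound $H_g(x,y,t)\leq \gamma \exp(-d^2(x,y)/(5t))/\mathrm{Vol}_g(B_g(x,\sqrt t))$ then follows from Davies' method of conjugation by exponential weights $e^{\psi}$, using the semigroup control of the first step to handle the perturbation terms and optimizing the choice of Lipschitz $\psi$.

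Finally, for the lower Gaussian bound I would run the standard chaining argument from the upper bound, using stochastic completeness (available from the Kato bound via Khasminskii's criterion) together with a parabolic Harnack inequality. The Harnack inequality in this regime can itself be assembled from the Nash inequality, the doubling property, and the semigroup domination. The main obstacle is that every classical tool in this chain (Laplacian comparison, parabolic Harnack, Li--Yau) was originally built for a \emph{pointwise} Ricci lower bound, so each one must be reconstructed using only the averaged Kato quantity $\kappa_t$, tracking constants explicitly in terms of $n$ and $f$; it is precisely in this bookkeeping that the smallness threshold $f(T)\leq 1/(16n)$ is consumed, ensuring that the Feynman--Kac perturbation by $\Ric_-$ never dominates the unperturbed heat flow.
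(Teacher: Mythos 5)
This proposition is not proved in the paper at all: it is imported verbatim from \cite[Proposition 2.6]{CarronMondelloTewodrose2022}, which in turn rests on \cite{Carron2019} and the method of \cite{ZhangZhu2018}. So there is no in-paper argument to compare against; what you have written is an attempt to reconstruct the proof of a cited black box. Your overall architecture (gradient semigroup domination with potential $\Ric_-$, Khasminskii's lemma to control the Schr\"odinger semigroup under the Dynkin threshold $f(T)\leq 1/(16n)$, then Gaussian two-sided bounds) is genuinely close to what the cited works do, and your reading $r=\sqrt{t}$ is the intended one.

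There is, however, one step that would fail as written. You propose to obtain volume doubling ``from the Kato condition by an integrated version of the Laplacian comparison theorem in the spirit of Petersen--Wei.'' Petersen--Wei's comparison requires smallness of $\|\Ric_-\|_{L^p}$ for $p>n/2$, and a Kato (or even Dynkin) bound does not imply any such $L^p$ control --- the Kato quantity $\kappa_t$ is a heat-semigroup average, not a spatial $L^p$ norm, and one cannot run a Riccati/Laplacian comparison along geodesics from it. In the actual proofs the logical order is reversed: one first proves a Li--Yau-type differential Harnack inequality for positive solutions of the heat equation directly from the Bochner formula plus the semigroup domination and Khasminskii bound (this is the Zhang--Zhu perturbation argument, where the error term $J(t)$ absorbs $\kappa_t$), then integrates the Harnack inequality to \emph{deduce} doubling and the on-diagonal bounds, and only afterwards gets the off-diagonal Gaussian bounds. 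Your Nash--Moser/Davies route for the upper bound and chaining for the lower bound would be fine once doubling and a Poincar\'e inequality are in hand, but as presented the doubling is assumed via a tool that is unavailable in this generality, so the chain of implications has a circular link. If you replace Step 2 by the Li--Yau estimate of \cite{ZhangZhu2018,Carron2019,Rose2019} as the source of doubling, the outline becomes a faithful sketch of the cited proof.
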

Moreover, in this case, it is known that $(M^n,g)\in \mathcal{K}(n,f)$ is a volume doubling space: for any $x\in M$, $0<s<r\leq \sqrt{T}$,
$$\frac{\mathrm{Vol}_g\left( B_g(x,r)\right)}{\mathrm{Vol}_g\left( B_g(x,s)\right)}\leq C\left( \frac{r}{s}\right)^{e^2n}.$$

If in addition $f$ satisfies a stronger integrability condition: 
\begin{equation}\label{SK-condition}
\int^T_0 \frac{\sqrt{f(s)}}{s}\, ds \leq\Lambda
\end{equation}
for some $\Lambda> 0$. As pointed out in \cite{CarronMondelloTewodrose2021},  $f$ satisfying \eqref{SK-condition} automatically implies that $f$ satisfies \eqref{weak-Kato} by shrinking $T$.   It was first shown by Carron \cite{Carron2019} that under the stronger assumption \eqref{SK-condition}, we can strength the conclusion to Ahlfors $n$-regular. 
\begin{prop}{\cite[Proposition 2.14 \& Corollary 2.15]{CarronMondelloTewodrose2021}}\label{prop:vol-SK}
Suppose $(M^n,g)\in \mathcal{K}(n,f)$ and $f$ satisfies \eqref{SK-condition} for some $\Lambda>0$. Then there exists $C_n>0$ such that for all $x\in M$ and $0<s<r\leq \sqrt{T}$,
$$\mathrm{Vol}_g\left( B_g(x,r)\right)\leq C_n^{\Lambda+1} r^n\quad\text{and}\quad \frac{\mathrm{Vol}_g\left( B_g(x,r)\right)}{\mathrm{Vol}_g\left( B_g(x,s)\right)}\leq C_n^{\Lambda+1}\left( \frac{r}{s}\right)^{n}.$$
Moreover,  for any $x,y\in M$ and $0\leq r\leq \sqrt{T}$,
$$\frac{\mathrm{Vol}_g\left( B_g(x,\sqrt{T})\right)}{T^{n/2}}\leq C_n^{(\Lambda+1)T^{-1/2} d(x,y)}\cdot  \frac{\mathrm{Vol}_g\left( B_g(y,r)\right)}{r^n}.$$ 
\end{prop}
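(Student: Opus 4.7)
The plan is to derive the Ahlfors regularity and the improved doubling from the Gaussian heat kernel bounds of Proposition~\ref{prop:heat-Kato} together with the strong Kato integrability \eqref{SK-condition}. The heat-kernel link is central: pairing the two-sided Gaussian bounds at $y=x$ yields
$$V_g(B_g(x,\sqrt{t}))\;\asymp_n\;\frac{1}{H_g(x,x,t)},\qquad t\in(0,T],$$
so volume comparisons at different scales reduce to comparisons of $t^{n/2}H_g(x,x,t)$ at different times. Moreover, by the standard short-time asymptotic of the heat kernel, $t^{n/2}H_g(x,x,t)\to (4\pi)^{-n/2}$ as $t\to 0^+$, providing a universal anchor at small scales.

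The first main step is a differential inequality for $\psi(x,t):=\log\!\bigl(t^{n/2} H_g(x,x,t)\bigr)$. Set
$$u(x,t):=\int_0^t\!\!\int_M H_g(x,y,s)(\Ric)_-(y)\,d\mathrm{vol}_g(y)\,ds,$$
so that $\sup_x u(\cdot,t)\le f(t)$ by hypothesis. Using the heat equation, the semigroup identity $H_g(x,x,2t)=\int_M H_g(x,y,t)^2\,d\mathrm{vol}_g(y)$, and a Bochner-type manipulation on $\log H_g$, one expects to extract a bound of the form $|\partial_t\psi(x,t)|\le C_n\sqrt{u(x,2t)}/t$. The key square root arises from a Cauchy--Schwarz applied to the Duhamel correction, exploiting that $(\Ric)_-$ is effectively paired against the $L^2$ weight $H_g^2$ rather than $H_g$ itself. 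Integrating on $(0,T)$ and invoking \eqref{SK-condition} gives $|\psi(x,T)-\psi(x,0^+)|\le C_n\Lambda$, which via the equivalence above yields both $V_g(B_g(x,r))\le C_n^{\Lambda+1} r^n$ and the $n$-dimensional doubling for all $0<s<r\le\sqrt{T}$.

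For the cross-point inequality, I would chain the pointwise bound along a minimizing geodesic from $x$ to $y$ of length $d=d(x,y)$. Pick intermediate points $x=z_0,z_1,\dots,z_N=y$ with consecutive distances at most $\sqrt{T}/2$ and $N\le C\,T^{-1/2}d+1$. Since $B_g(z_i,\sqrt{T})\supset B_g(z_{i+1},\sqrt{T}/2)$, the newly-established doubling applied inside $B_g(z_{i+1},\sqrt{T})$ gives
$$\frac{V_g(B_g(z_{i+1},\sqrt{T}))}{T^{n/2}}\le C_n^{\Lambda+1}\cdot \frac{V_g(B_g(z_i,\sqrt{T}))}{T^{n/2}}.$$
Iterating these $N$ steps and then applying the doubling once more at $y$ to pass from scale $\sqrt{T}$ down to scale $r$ produces the exponential factor $C_n^{(\Lambda+1)T^{-1/2}d(x,y)}$ in the claimed inequality.

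The principal obstacle I anticipate is obtaining only $\sqrt{f(t)}$ rather than $f(t)$ in the derivative estimate for $\psi$ in the first step: this sharpness is exactly what makes \eqref{SK-condition}, an integrability hypothesis on $\sqrt{f}/s$ and not on $f/s$, the natural condition. Once that inequality is in hand, the Grönwall integration and the geodesic chaining arguments are essentially routine.
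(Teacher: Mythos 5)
The paper gives no proof of this proposition: it is imported verbatim from \cite{CarronMondelloTewodrose2021} (Proposition 2.14 and Corollary 2.15), so the comparison is with that external argument. Your overall architecture does match it: reduce volumes to the on-diagonal heat kernel via Proposition~\ref{prop:heat-Kato}, control the evolution of $t^{n/2}H_g(x,x,t)$ with an error of size $\sqrt{f(t)}/t$ so that \eqref{SK-condition} closes the integration in time, and chain along a geodesic for the cross-point inequality. You have also correctly identified why the square root is the natural threshold.

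There is, however, a genuine error at the crux. The two-sided bound $|\partial_t\psi(x,t)|\le C_n\sqrt{u(x,2t)}/t$ you posit is false, not merely unproved. Take the flat torus $(\mathbb{R}/\epsilon\mathbb{Z})^n$ with $\epsilon$ small: it lies in $\mathcal{K}(n,f)$ for any admissible $f$ since $\Ric\equiv 0$ gives $u\equiv 0$, so your inequality would force $t^{n/2}H_g(x,x,t)\equiv(4\pi)^{-n/2}$ and hence a volume lower bound $\mathrm{Vol}_g(B_g(x,r))\ge c_n r^n$; but $\mathrm{Vol}_g(B_g(x,r))=\epsilon^n$ once $r\ge\sqrt{n}\,\epsilon$ (equivalently, $t^{n/2}H_g(x,x,t)\to \epsilon^{-n}t^{n/2}$ is unbounded). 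What is actually established in the cited source is only the one-sided almost-monotonicity $\partial_t\psi\ge -C_n\sqrt{f(t)}/t$, i.e.\ $t\mapsto e^{\xi(t)}(4\pi t)^{n/2}H_g(x,x,t)$ is non-decreasing with $\xi(t)=C_n\int_0^t\sqrt{f(s)}\,s^{-1}\,ds\le C_n\Lambda$. This one-sided statement suffices for everything claimed: it gives $H_g(x,x,t)\ge e^{-C_n\Lambda}(4\pi t)^{-n/2}$, hence the Ahlfors upper bound via the upper Gaussian estimate, and the doubling follows from $\theta(x,s^2)\le e^{\xi(r^2)-\xi(s^2)}\theta(x,r^2)$ combined with the two-sided on-diagonal comparison, with no upper bound on $\theta$ ever required. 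A two-sided derivative bound would, if true, also yield reverse doubling, which cannot hold without a non-collapsing hypothesis. Finally, even setting the sign issue aside, the derivative estimate is only asserted (``one expects to extract\dots''), and since it is the entire analytic content of the proposition, the proposal as written is not a proof; the chaining step for the cross-point inequality is fine once the doubling is in hand, up to absorbing the extra additive constants in the exponent into $C_n$.
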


We are primarily interested in the non-collapsed case in this work.  
We end this section by observing that the non-collapsed strong Kato assumption can be re-phased as an estimate in an integral form.

%
% For $v>0$, we say that a pointed compact manifold $(M^n,g,x_0)\in \mathcal{K}(n,f,v)$ if 
%$$(M^n,g)\in \mathcal{K}(n,f)\quad\text{and}\quad V_g(x_0,\sqrt{T})\geq v T^{n/2}.$$
%By Proposition~\ref{prop:vol-SK}, if $f$ satisfies \eqref{SK-condition}, then the volume non-collapsing at $x_0$ of scale $\sqrt{T}$ is equivalent to volume non-collapsing at any point in $M$ and of scale below $\sqrt{T}$. Clearly, $\mathcal{K}_{IC1}(n,f)\subseteq \mathcal{K}(n,f)$ and $ \mathcal{K}_{IC1}(n,f,v)\subseteq \mathcal{K}(n,f,v)$. 

\begin{lma}\label{lma:SKT-Morry}
Suppose $(M^n,g,x_0)\in \mathcal{K}_{\mathrm{IC1}}(n,f)$ for some $f$ satisfying \eqref{SK-condition}, then there exists $C_1(n,\Lambda,T)>0$ such that for all $(x,t)\in M\times (0,T]$,
$$\int_M \frac1{t^{n/2}} \exp\left(-\frac{C_1 d_{g_0}^2(x,y)}{t} \right)(\mathrm{IC}_{1})_-(y) \; d\mathrm{vol}_{g_0}(y)\leq \frac{C_1 f(t)}{t}.$$
\end{lma}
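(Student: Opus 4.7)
The plan is to turn the integrated Kato control on $(\mathrm{IC}_1)_-$ into a pointwise Gaussian estimate by plugging in the heat kernel two-sided bounds of Proposition 2.6 together with the Ahlfors $n$-regularity of Proposition 2.14.

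First I would reduce the hypothesis to the ordinary Kato form. Because $\mathrm{C}_{\mathrm{PIC1}}\subseteq \mathrm{C}_{\mathrm{Ric}}$ we have $\kappa_t(M,g_0)\leq (n-1)\kappa_{t,\mathrm{IC1}}(M,g_0)\leq (n-1)f(t)$, so $(M,g_0)\in \mathcal{K}(n,(n-1)f)$; the normalization $f(T)\leq \frac{1}{16n(n-1)}$ in \eqref{K-condition} is exactly what guarantees the Kato hypothesis \eqref{weak-Kato} for $(n-1)f$. Since the integrability $\int_0^T \sqrt{(n-1)f(s)}/s\,ds\leq \sqrt{n-1}\,\Lambda$, Proposition 2.14 applies and yields $\mathrm{Vol}_{g_0}(B_{g_0}(x,\sqrt s))\leq C_n^{\Lambda+1} s^{n/2}$ for all $x\in M$ and $s\in(0,T]$, up to a constant depending only on $n$ and $\Lambda$.

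Next I would combine this with the lower bound in Proposition 2.6 (applied with radius $r=\sqrt s$) to get the absolute Gaussian lower bound for the heat kernel:
\begin{equation*}
H_{g_0}(x,y,s)\;\geq\; \frac{c_0}{s^{n/2}}\exp\!\left(-\frac{\gamma\, d_{g_0}^2(x,y)}{s}\right), \qquad s\in(0,T],
\end{equation*}
with $c_0,\gamma$ depending only on $n,\Lambda$. Plugging this into the Kato bound
\begin{equation*}
\int_0^t\!\int_M H_{g_0}(x,y,s)(\mathrm{IC}_1)_-(y)\,d\mathrm{vol}_{g_0}(y)\,ds\;\leq\; f(t),
\end{equation*}
and restricting the $s$-integration to $[t/2,t]$, one has $s^{-n/2}\geq t^{-n/2}$ and $e^{-\gamma d^2/s}\geq e^{-2\gamma d^2/t}$ on this interval. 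This yields
\begin{equation*}
\frac{c_0\, t}{2}\int_M \frac{1}{t^{n/2}}\exp\!\left(-\frac{2\gamma\, d_{g_0}^2(x,y)}{t}\right)(\mathrm{IC}_1)_-(y)\,d\mathrm{vol}_{g_0}(y)\;\leq\; f(t),
\end{equation*}
which is the desired inequality after rearranging and absorbing $c_0,\gamma,(n-1)$ into a single constant $C_1=C_1(n,\Lambda,T)$.

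There is no serious obstacle here; the argument is essentially a ``de-integration'' trick that extracts a fixed-time kernel estimate from a time-integrated one by discarding half of the temporal integration domain. The only thing requiring minor care is checking that the constant in \eqref{K-condition} is set precisely so that the $(n-1)$-multiplied Kato function still satisfies the hypotheses needed to invoke Propositions 2.6 and 2.14; this is exactly the role of the $\frac{1}{16n(n-1)}$ threshold in the definition of $\mathcal{K}_{\mathrm{IC1}}(n,f)$. Once this bookkeeping is done, the lemma is immediate.
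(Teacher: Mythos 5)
Your argument is correct and follows essentially the same route as the paper's proof: reduce to the ordinary Kato class via $\kappa_t\leq (n-1)\kappa_{t,\mathrm{IC1}}$, convert the heat kernel lower bound of Proposition~\ref{prop:heat-Kato} into an absolute Gaussian lower bound using the Ahlfors upper volume bound of Proposition~\ref{prop:vol-SK}, and then restrict the time integration to $[t/2,t]$ to de-integrate. The only (harmless) difference is that you track the factor $n-1$ explicitly where the paper absorbs it into "a possibly larger $\Lambda$."
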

\begin{proof}
Since $\kappa_t(M,g)\leq (n-1)\kappa_{t,\mathrm{IC1}}$, we have $\mathcal{K}_{\mathrm{IC1}}(n,f)\subseteq \mathcal{K}(n,f)$ for a possibly larger $\Lambda$. By proposition~\ref{prop:heat-Kato}, Proposition~\ref{prop:vol-SK} and the volume non-collapsing, there exist $C_0(n,\Lambda,T),\gamma(n)>0$ such that for all $(x,t)\in M\times (0,T]$,
\begin{equation}
\begin{split}
&\quad  \int^t_0 \int_M H(x,y,s)(\mathrm{IC}_{1})_-(y)\, d\mathrm{vol}_{g_0}(y) ds\\
&\geq \int^t_0 \int_M \frac{\gamma}{V_g(x,\sqrt{s})} \exp\left(-\frac{\gamma d_{g_0}^2(x,y)}{s} \right)(\mathrm{IC}_{1})_-(y) \; d\mathrm{vol}_{g_0}(y) ds\\
&\geq \frac\gamma{C_n^{\Lambda+1}}\int^t_0 \int_M \frac{ 1}{s^{n/2}} \exp\left(-\frac{\gamma d_{g_0}^2(x,y)}{s} \right)(\mathrm{IC}_{1})_-(y) \; d\mathrm{vol}_{g_0}(y) ds.
\end{split}
\end{equation}

Combines this with the assumption of Kato-type bound, we have for all $(x,t)\in M\times (0,T]$,
\begin{equation}
\begin{split}
f(t)&\geq \frac\gamma{C_n^{\Lambda+1}}\int^t_{t/2} \int_M \frac1{s^{n/2}} \exp\left(-\frac{\gamma d_{g_0}^2(x,y)}{s} \right)(\mathrm{IC}_{1})_-(y) \; d\mathrm{vol}_{g_0}(y) ds\\
&\geq \frac{t\gamma}{2 C_n^{\Lambda+1}}\int_M \frac1{t^{n/2}} \exp\left(-\frac{2\gamma  d^2_{g_0}(x,y)}{t} \right)(\mathrm{IC}_{1})_-(y)\; d\mathrm{vol}_{g_0}(y).
\end{split}
\end{equation}
This completes the proof by re-arranging the inequality.
\end{proof}

\section{Ricci flow smoothing and estimates}

The idea of this work is to mollify the metric $g_0$ using the Ricci flow. The Ricci flow is a one parameter family of metrics $g(t)$ satisfying 
\begin{equation}
\partial_t g(t)=-2\Ric(g(t)),\quad\quad g(0)=g_0.
\end{equation}
In the coming few sections, we will focus on obtaining estimates of Ricci flows. 

If $(M^n,g_0)$ is a compact manifold, it was shown by Hamilton \cite{Hamilton1982} that the Ricci flow admits a short-time solution $g(t)$.  It is by-now known to be extremely powerful in regularizing metrics even in case without bounded curvature, for instances see the works \cite{HuangWang2022,Wang2020,Lai2019, SimonTopping2021,HochardPhd} and the reference therein.  Although we are primarily interested in the compact case, we will state our result in this section using a localized form so that it also applies to the non-compact case. 

It is a general philosophy that the Ricci flow will tend to be positively curved as it evolves, for instances see the list in Lemma~\ref{lma:curvSatisfyingProp} and the recent work of Brendle \cite{Brendle2019}.  The following Proposition illustrates its relation to curvature estimates in the non-collapsed case. This is a slight generalization of the curvature estimates in \cite{Lai2019,LeeTam2021,SimonTopping2023}.
\begin{prop}\label{Prop:CurvEsti}
For any $n\geq 3,v_0,L>0$,  there exist $\tilde S(n,v_0,L),a(n,v_0,L)>0$ such that the following holds. Suppose $(M^n,g(t)), t\in [0,S]$ is a smooth solution to the Ricci flow (not necessarily complete) such that for some $x_0\in M$, we have
\begin{enumerate}
\item[(a)]$B_{g(t)}(x_0,1)\Subset M$ for all $t\in [0,S]$;
\item[(b)] for all $x\in B_{g(t)}(x_0,\frac34),t\in [0,S]$ and $r\in (0,\frac14\sqrt{L^{-1}t}\wedge \frac14]$,
$$\mathrm{Vol}_{g(t)}\left(B_{g(t)}(x,r) \right)\geq v_0 r^n.$$
\item[(c)] for all $x\in B_{g(t)}(x_0,1)$ and $t\in (0,S]$, we have either
\begin{enumerate}
\item[(i)] $n=3$ or;
\item[(ii)] $\mathrm{IC}_1(g(t))\geq -Lt^{-1}$ or;
\item[(iii)] $\mathrm{OB}(g(t))\footnote{OB denotes the orthogonal bisectional curvature in the K\"ahler case}\geq -Lt^{-1}$ when $n$ is even and  $g(t)$ is K\"ahler.
\end{enumerate}
\end{enumerate}  
Then for $t\in (0,\tilde S\wedge S]$ and $x\in B_{g(t)}(x_0,\frac12)$, we have 
$$|\Rm(x,t)|\leq at^{-1}.$$
\end{prop}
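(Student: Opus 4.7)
The plan is a contradiction-and-blow-up argument in the spirit of \cite{BamlerCabezasWilking2019,Lai2019,SimonTopping2021,LeeTam2021}. Assume the conclusion fails; then one obtains a sequence of Ricci flows $(M_i,g_i(t))$ satisfying (a)-(c) together with space-time points $(y_i,s_i)$, $y_i\in B_{g_i(s_i)}(x_{0,i},\tfrac12)$, $s_i\to 0$, and $Q_i:=|\mathrm{Rm}(y_i,s_i)|_{g_i(s_i)}$ with $Q_is_i\to\infty$. A parabolic point-selection lemma of Perelman-Hamilton type (as used in \cite{SimonTopping2021,LeeTam2021}) lets me replace $(y_i,s_i)$ by $(\bar y_i,\bar s_i)$ at which the curvature is maximal up to a factor of $4$ on a backward parabolic cylinder of scale $\sim Q_i^{-1/2}$ in space and $\sim Q_i^{-1}$ in time, with $Q_i\bar s_i\to\infty$ preserved.

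Next, parabolically rescale $\tilde g_i(\tilde t):=Q_i\,g_i(\bar s_i+Q_i^{-1}\tilde t)$ for $\tilde t\in[-Q_i\bar s_i,0]$. Then $|\tilde{\mathrm{Rm}}(\bar y_i,0)|=1$ and $|\tilde{\mathrm{Rm}}|$ is uniformly bounded on fixed parabolic subcylinders. The non-collapsing hypothesis (b) rescales to $\mathrm{Vol}_{\tilde g_i(\tilde t)}(B(\cdot,\tilde r))\geq v_0\tilde r^n$ for all $\tilde r\leq \tfrac14 L^{-1/2}\sqrt{Q_i\bar s_i+\tilde t}$; since $Q_i\bar s_i\to\infty$, this yields non-collapsing at every scale in the limit. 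In case (c)(ii), the curvature lower bound rescales to $\tilde{\mathrm{Rm}}+\frac{L}{Q_i\bar s_i+\tilde t}\,\mathrm{I}\in\mathrm{C}_{\mathrm{PIC1}}$, whose coefficient tends to $0$; in case (i), Hamilton-Ivey pinching supplies an analogous effective lower bound vanishing in the limit; in case (iii), the Kähler condition and $\mathrm{OB}\geq 0$ pass through. Hamilton's compactness theorem, with injectivity radius control from the non-collapsing via Cheeger-Gromov, produces a subsequential Cheeger-Gromov limit $(\tilde M_\infty,\tilde g_\infty(\tilde t),\bar y_\infty)_{\tilde t\in(-\infty,0]}$ that is a complete smooth ancient Ricci flow, non-collapsed at every scale, satisfies $|\tilde{\mathrm{Rm}}|(\bar y_\infty,0)=1$, and has curvature in $\mathrm{C}_{\mathrm{PIC1}}$ (respectively $\mathrm{Rm}\geq 0$ in dimension $3$, $\mathrm{OB}\geq 0$ in the Kähler case).

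The principal obstacle is to rule out the existence of such a limit. I would appeal to rigidity results for ancient flows: the Hamilton-Perelman classification of $\kappa$-solutions in dimension $3$, Brendle's rigidity for non-collapsed ancient Ricci flows with curvature in $\mathrm{C}_{\mathrm{PIC1}}$ for $n\geq 4$, and the corresponding Kähler statement for non-negative orthogonal bisectional curvature. In each case, non-collapsing at all scales combined with $\mathrm{Ric}\geq 0$ (a consequence of $\mathrm{C}_{\mathrm{PIC1}}$) forces positive asymptotic volume ratio, which for an ancient Ricci flow with non-negative Ricci forces $\tilde g_\infty$ to be flat. This contradicts $|\tilde{\mathrm{Rm}}|(\bar y_\infty,0)=1$, completing the argument. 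The delicate point in the setup is that (b) must rescale to non-collapsing at \emph{every} scale: this is exactly what the built-in $t$-dependence of the admissible scale $\tfrac14\sqrt{L^{-1}t}$ delivers, matched precisely to the contradiction hypothesis $Q_i\bar s_i\to\infty$.
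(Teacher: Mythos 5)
Your proposal is correct and follows essentially the same route as the paper: contradiction, Perelman-type point-picking, parabolic rescaling with $Q_i\bar s_i\to\infty$, Cheeger--Gromov--Taylor injectivity bounds feeding Hamilton compactness, and then rigidity of complete ancient solutions with Euclidean volume growth under the respective curvature conditions (the paper cites Bamler--Cabezas-Rivas--Wilking for $\mathrm{C}_{\mathrm{PIC1}}$, Perelman via Hamilton--Ivey for $n=3$, and Li--Ni for the K\"ahler $\mathrm{OB}$ case). The only cosmetic slip is your closing heuristic that non-negative \emph{Ricci} plus positive asymptotic volume ratio alone forces flatness of an ancient flow---that is not known in general dimension---but it is harmless since in each case you have already secured the stronger hypothesis needed for the cited rigidity theorems.
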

\begin{proof}
The proof of the curvature estimate uses the Perelman-inspired point-picking argument from \cite[Lemma 2.1]{SimonTopping2023}.  We illustrate the case of (ii) and will point out the modification needed for (i) and (iii).

Suppose the conclusion is false for some $n,v_0,L>0$. Then for any $a_k\to +\infty$, we can find a sequence of manifold $M^n_k$, Ricci flows $g_k(t),t\in [0,S_k]$ and $p_k\in M_k$ satisfying the hypotheses but so the curvature estimate fails with $a=a_k$ in an arbitrarily short time.  We might assume $a_kS_k\to 0$.  Using the smoothness of the flow, we can choose $t_k\in (0,S_k]$ such that 
\begin{enumerate}
\item[(i)] $B_{g_k(t)}(x_k,1)\Subset M_k$ for $t\in [0,t_k]$;
\item[(ii)] $ \mathrm{Vol}_{g_k(t)}\left(B_{g_k(t)}(x,r) \right)\geq v_0r^n$  for $t\in [0,t_k]$, $x\in B_{g_k(t)}(x_k,\frac34)$ and $r\in (0,\frac14\sqrt{L^{-1}t}]$;
\item[(iii)] $\mathrm{IC}_1(g_k(t))\geq -Lt^{-1}$ on $B_{g_k(t)}(x_k,1)$ for $t\in [0,t_k]$;
\item[(iv)] $|\Rm_{g_k}(x,t)|<a_k t^{-1}$ for $t\in [0,t_k]$ and $x\in B_{g_k(t)}(x_k,\frac12)$;
\item[(v)]  $|\Rm_{g_k}(z_k,t_k)|=a_k t_k^{-1}$ for some $z_k\in \overline{B_{g_k(t_k)}(x_k,\frac12)}$.
\end{enumerate}

By (v) and the fact that $a_kt_k\to 0$,  \cite[Lemma 5.1]{SimonTopping2023} implies that for sufficiently large $k\in\mathbb{N}$, we can find $\b(n)>0$, $\tilde t_k\in (0,t_k]$ and $\tilde x_k\in B_{g_k(\tilde t_k)}(x_k,\frac34 -\frac12 \b \sqrt{a_k\tilde t_k})$ such that 
\begin{equation}\label{eqn:curEsti}
|\Rm_{g_k}(x,t)|\leq 4|\Rm_{g_k}(\tilde x_k,\tilde t_k)|=4Q_k
\end{equation}
whenever $d_{g_k(\tilde t_k)}(x,\tilde x_k)<\frac18 \b a_k Q_k^{-1/2}$ and $\tilde t_k-\frac18 a_kQ_k^{-1}\leq t\leq \tilde t_k $ where $\tilde t_kQ_k \geq a_k\to +\infty$. We note here that from the proof of  \cite[Lemma 5.1]{SimonTopping2023}, for $(x,t)\in \frac18 \b a_k Q_k^{-1/2}\times [\tilde t_k-\frac18 a_kQ_k^{-1}, \tilde t_k]$, we have $d_{g_k(t)}(x,x_k)<1$ so that the conditions applied.

%Moreover,  if $x\in B_{g_k(\tilde t_k)}(\tilde x_k,\frac18 \b a_k Q_k^{-1/2})$ and $t\in [\tilde t_k-\frac18 a_kQ_k^{-1}, \tilde t_k ]$, then \eqref{eqn:curEsti} implies 
%\begin{equation}
%\begin{split}
%d_{g_k(t)}(x,x_k) &\leq  \red    d_{g_k(\tilde t_k)}(x,x_k) \\
%&\leq 
%\end{split}
%\end{equation}

We now consider the parabolic rescaled Ricci flows $\tilde g_k(t)=Q_k g(t_k+Q_k^{-1}t),t\in [-\frac18 a_k,0]$ so that 
\begin{enumerate}
\item[(a)] $ B_{\tilde g_k(0)}(\tilde x_k,\frac18 \b a_k)\Subset M_k$;
%\item[(b)] $\red \mathrm{Vol}_{\tilde g_k(0)}(\tilde x_k,1)\geq v_0$ for $r\in (0,$;
%\item[(c)] $ \displaystyle\mathrm{IC}_1(\tilde g_k(t))\geq -\frac{L}{t+Q_kt_k}$ on $ B_{\tilde g_k(0)}(\tilde x_k,\frac18 a_k)\times  [-\frac18a_k,0]$;
\item[(b)] $|\Rm_{\tilde g_k(t)}|\leq 4$ on $B_{\tilde g_k(0)}(\tilde x_k,\frac18 \b a_k)\times [-\frac18a_k,0]$;
\item[(c)]  $|\Rm_{\tilde g_k}(\tilde x_k,0)|=1$;
\item[(d)] $ \mathrm{Vol}_{\tilde g_k(0)}\left(B_{g_k(t)}(\tilde x_k,r) \right)\geq v_0r^n$ for $r\in (0,\frac14 \sqrt{L^{-1}Q_kt_k}]$;
\item[(e)] $ \displaystyle\mathrm{IC}_1(\tilde g_k(t))\geq -\frac{L}{t+Q_k\tilde t_k}$ on $ B_{\tilde g_k(0)}(\tilde x_k,\frac18 a_k)\times  [-\frac18a_k,0]$.
\end{enumerate}

By (b), (d) and the result of Cheeger, Gromov and Taylor \cite{CheegerGromovTaylor1982}, the injectivity radius of $\tilde g_k(0)$ at $\tilde x_k$ is bounded from below uniformly.  Together with the curvature estimates from (b), we may apply Hamilton’s compactness theorem \cite{Hamilton1995} to conclude that, $(M_k,\tilde g_k(t),\tilde x_k)$ sub-converges in the Cheeger-Gromov sense to $(M_\infty,\tilde g_\infty(t),\tilde x_\infty)$ which is a complete non-flat ancient solution to the Ricci flow with bounded curvature.  By (e),  $\mathrm{Rm}(\tilde g_\infty(t))\in \mathrm{C}_{\mathrm{PIC1}}$ for all $t\leq 0$. Moreover,  (d) implies that it is of Euclidean volume growth. But this contradicts with \cite[Lemma 4.2]{BamlerCabezasWilking2019}. This completes the proof. 

In case of (iii), each $g_k(t)$ is in addition K\"ahler and the conclusion (e) is replaced by $\mathrm{OB}(\tilde g_k(t))\geq -\frac{L}{t+Q_k\tilde t_k}$ instead so that the limiting Ricci flow $\tilde g_\infty(t)$ is K\"ahler, non-flat with Euclidean volume growth and has non-negative bounded holomorphic orthogonal bisectional curvature.  This contradicts with  \cite[Proposition 6.1]{LiNi2020}.

In case of (i),  the Ricci curvature of $\tilde g_k(t)$ is not almost non-negative but we might instead apply Hamilton-Ivey estimates (for instances see \cite[Proposition 9.8]{ChowBookI}) on the limiting Ricci flow $\tilde g_\infty(t)$ to conclude that $\tilde g_\infty(t)$ has non-negative sectional curvature and Euclidean volume growth. The non-flatness contradicts with Perelman's Theorem \cite[Section 11]{Perelman2002}. 
\end{proof}

Thanks to Proposition~\ref{Prop:CurvEsti},  obtaining curvature estimates is equivalent to establishing the persistence of volume non-collapsing and curvature lower bound.  To obtain a the curvature lower bound along the flow, we will need a version of the local maximum principle which allow the initial data to be merely unbounded.  This is a modified version of the local maximum principle in \cite{HochardPhd,LeeTam2022}. 
{
\begin{prop}\label{prop:MP}
Suppose $g(t)$ is a smooth solution to the Ricci flow on $M\times [0,S]$ (not necessarily) and there exist $\a,r>0$ so that all $t\in [0,S]$, 
\begin{enumerate}
\item[(a)] $|\Rm(g(t))|\leq \a t^{-1}$;
\item[(b)] $\mathrm{inj}(g(t))\geq \sqrt{\a^{-1}t}$;
\item[(c)] either 
\begin{enumerate}
\item[(i)] $M$ is compact or;
\item[(ii)]$B_{g_0}(x_0,2r)\Subset B_{g(t)}(x_0,4r)\subseteq B_{g(t)}(x_0,16r)\Subset M$ for some $x_0\in M$.
\end{enumerate}
\end{enumerate}
 If $\varphi$ is a continuous non-negative function on $M\times [0,S]$ with $\varphi(0)=\varphi_0$ such that
\begin{enumerate}
\item[(i)] $\varphi(t)\leq \a t^{-1}$ on $M\times (0,S]$;
\item[(ii)]  $\varphi$ satisfies 
$$\rheat \varphi\leq \mathcal{R}\varphi+K\varphi^2$$
in the sense of barrier for some constant $K\geq 0$;
\item[(iii)]  There exist $\Lambda,C_1>0$ and a non-decreasing $f:(0,r^2]\to (0,+\infty)$ satisfying $\int^{r^2}_0 s^{-1}f(s) ds \leq \Lambda$ so that for all $x\in B_{g_0}(x_0,r)$ and $t\in (0,r^2]$,
$$\int_{B_{g_0}(x_0,2r)}\frac1{t^{n/2}} \exp\left(-\frac{C_1}{t}d_{g_0}^2(x,y) \right)\varphi_0(y) \; d\mathrm{vol}_{y,g_0} \leq \frac{C_1f(t)}{t}.$$
\end{enumerate}
Then there exist $\tilde\Lambda(n,\a,f,\Lambda,C_1),\tilde S(K,n,\a,f,\Lambda,C_1)>0$ so that for all $t\in (0,\tilde S r^2\wedge S]$,
$$ \varphi(x_0,t)\leq \tilde\Lambda  \left( \frac{f(\tilde\Lambda  t)}{t}+r^{-2}\right).$$
\end{prop}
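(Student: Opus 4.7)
My plan is to establish the estimate via a conjugate-heat-kernel comparison combined with a continuity (bootstrap) argument in the spirit of the Hochard / Lee-Tam local maximum principle. The strong Kato integrability $\int_0^T \sqrt{f(s)}/s\, ds \le \Lambda$ will be exactly what absorbs the nonlinear term $K\varphi^2$ in the barrier inequality.

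For the setup, the hypotheses $|\Rm|\le \alpha/s$ and $\mathrm{inj}(g(s)) \ge \sqrt{\alpha^{-1}s}$ yield local non-collapsing at every scale $\sqrt{s}$ and Perelman-type distance distortion $|d_{g(s)} - d_{g_0}| = O(\sqrt{\alpha s})$; together these produce standard Gaussian upper bounds on the Ricci-flow heat kernel $u(y,s)$ based at $(x_0, t)$, in particular
$$u(y,0) \le \frac{C_2}{t^{n/2}} \exp\Bigl(-\frac{d_{g_0}^2(x_0,y)}{C_2 t}\Bigr),$$
for some $C_2 = C_2(n, \alpha)$. In the non-compact case, I introduce a cutoff $\eta$ supported in $B_{g_0}(x_0, 2r)$, equal to $1$ on $B_{g_0}(x_0, r)$, with $|\nabla\eta|^2 + |\Delta\eta| \lesssim r^{-2}$; the ball inclusions in (c)(ii) together with distance distortion ensure the support remains consistent with the flow throughout the relevant time interval. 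Setting $F(s) := \int u\,\eta\,\varphi\, dV_{g(s)}$, the barrier inequality for $\varphi$ combined with the conjugate-heat equation for $u$ and integration by parts yields, after absorbing the Ricci-flow measure and scalar-curvature distortion terms using $\varphi \le \alpha/s$,
$$F'(s) \le K \int u\,\eta\,\varphi^2\, dV_{g(s)} + \frac{C(n,\alpha)}{r^2} F(s).$$

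The heart of the argument is a continuity argument. Set $\Psi(s) := \tilde\Lambda(f(\tilde\Lambda s)/s + r^{-2})$ with $\tilde\Lambda$ to be fixed, and let $T_\ast$ be the largest time in $(0, \tilde S r^2 \wedge S]$ for which $\varphi \le \Psi$ holds on a sufficiently large ball about $x_0$ so that the support of $\eta$ is covered up to Gaussian-tail corrections. Under this bootstrap, $\int u\eta\varphi^2 \le \Psi(s) F(s)$, so Gr\"onwall gives
$$F(T_\ast) \le F(0) \exp\Bigl(K \int_0^{T_\ast} \Psi\, ds + C\tilde S\Bigr).$$
The strong Kato condition with $f$ bounded yields $\int_0^\tau f(\tilde\Lambda s)/s\, ds = \int_0^{\tilde\Lambda \tau} f(u)/u\, du \le \sqrt{f(\tilde\Lambda\tau)} \cdot \Lambda$, which tends to $0$ as $\tilde\Lambda\tau \to 0$; for $\tilde S$ sufficiently small, the exponential factor is as close to $1$ as desired. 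Meanwhile, the Gaussian bound on $u(y,0)$ combined with hypothesis (iii) applied at time $\tilde\Lambda t$ (valid for $\tilde\Lambda t \le r^2$, and requiring $\tilde\Lambda \ge C_1 C_2$ for Gaussian-width matching) gives $F(0) \le C_\ast f(\tilde\Lambda t)/t$ with $C_\ast = C_1 C_2\tilde\Lambda^{n/2-1}$. Combining and choosing $\tilde\Lambda$ large enough (with $\tilde S$ correspondingly small) yields $\varphi(x_0,t) \le \tilde\Lambda(f(\tilde\Lambda t)/t + r^{-2})$; applying the same argument at every basepoint $y_0 \in B_{g(s_0)}(x_0, r/2)$ in place of $x_0$ (still inside $B_{g_0}(x_0, r)$ by distance distortion for $\tilde S$ small) closes the bootstrap.

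The principal obstacle will be closing the bootstrap consistently. The Gaussian-matching constant $C_\ast$ scales polynomially in $\tilde\Lambda$ (roughly as $\tilde\Lambda^{n/2 - 1}$), whereas the nonlinear exponential factor scales as $\exp(K\tilde\Lambda\Lambda)$; a naive choice of $\tilde\Lambda$ does not close the scheme. The resolution exploits not just the \emph{finiteness} but the \emph{smallness} of the Kato integral $\int_0^{\tilde\Lambda\tilde S} f(u)/u\, du$ for $\tilde S$ small, rendering the exponential factor close to $1$ and allowing $\tilde\Lambda$ to be chosen depending only on $n, \alpha, f, \Lambda, C_1$ (independent of $K$). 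This is precisely the source of the dependence $\tilde S = \tilde S(K, n, \alpha, f, \Lambda, C_1)$ in the conclusion, and the factor $\tilde\Lambda$ inside $f(\tilde\Lambda t)$ is unavoidable, coming from the matching of Gaussian widths between the heat kernel bound and hypothesis (iii). A secondary technical point is the careful accounting of cutoff errors via Perelman distance distortion under the singular curvature bound $|\Rm| \le \alpha/s$, ensuring that the support of $u\eta$ remains in the region where hypothesis (iii) is applicable.
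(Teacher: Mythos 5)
Your architecture is essentially the dual formulation of the paper's argument: the paper compares $\varphi$ pointwise against the supersolution $u(x,t)=\int G(x,t;y,0)\varphi_0(y)\,d\mathrm{vol}_{g_0}$ built from a Dirichlet heat kernel of $\partial_t-\Delta-\mathcal{R}$ (via the Lee--Tam local maximum principle), while you pair $\varphi$ against the adjoint kernel and run Gr\"onwall; both linearize the $K\varphi^2$ term by a bootstrap, and both exploit that $\int_0^\tau f(s)/s\,ds\to 0$ as $\tau\to 0$ so the resulting exponential factor is harmless. However, two steps in your write-up are genuine gaps as stated.

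First, the scalar-curvature term cannot be ``absorbed using $\varphi\le\a/s$.'' If $u$ solves Perelman's conjugate heat equation $\partial_s u=-\Delta u+\mathcal{R}u$, the computation leaves a residual $\int \mathcal{R}u\eta\varphi\,dV_{g(s)}$, which is only controlled by $C(n)\a s^{-1}F(s)$; since $\int_0^t s^{-1}ds=\infty$, Gr\"onwall fails. The term must \emph{cancel exactly}, which forces $u$ to solve the plain backward equation $\partial_s u=-\Delta_{g(s)}u$, i.e.\ $u(\cdot,s)=G(x_0,t;\cdot,s)$ where $G$ is the fundamental solution of $\partial_t-\Delta-\mathcal{R}$ --- precisely the kernel the paper constructs and for which it quotes Gaussian upper bounds under $|\Rm|\le\a t^{-1}$. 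You need to name this kernel and justify its Gaussian bound; ``standard'' conjugate-heat-kernel bounds for the wrong operator do not suffice. (A smaller inaccuracy of the same flavour: your bound $\int_0^{\tilde\Lambda\tau}f(u)u^{-1}du\le\sqrt{f(\tilde\Lambda\tau)}\,\Lambda$ uses the strong Kato condition $\int\sqrt f/s\le\Lambda$, which is not the hypothesis here; what you actually have, and all you need, is $\int_0^{r^2}f(s)s^{-1}ds\le\Lambda$ plus absolute continuity of the integral.)

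Second, the closing of the bootstrap is where the real work lies and your sketch elides it. To propagate $\varphi\le\Psi$ you must prove the estimate at every basepoint $y_0$ in a region covering $\supp\eta$ up to Gaussian tails, but hypothesis (iii) is only available on $B_{g_0}(x_0,r)$, and for $y_0$ near the boundary of that ball the cutoff error scales like $\mathrm{dist}(y_0,\partial)^{-2}$, not $r^{-2}$. A uniform ansatz $\Psi=\tilde\Lambda(f(\tilde\Lambda s)/s+r^{-2})$ therefore cannot close at all basepoints simultaneously. The paper's device is to build the distance-to-boundary weight into the bootstrap: it runs a first-violation argument for $\varphi(x,t)<\tilde\Lambda(\mathfrak F'(t)+\rho(x)^{-2})$ with $\rho(x)=$ distance of $x$ to $\partial B_{g_0}(x_0,1)$, uses the Dirichlet kernel (which kills the exterior) and quantifies the boundary contribution as $16(t\rho^{-2})\rho^{-2}$, and regularizes $\mathfrak F$ by an $\e s^{-1/2}$ term so that $\mathfrak F'(t)\to+\infty$ guarantees the violation time is positive. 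You would need an analogous spatially weighted ansatz (or a Dirichlet truncation with a quantified boundary term, absorbing the annulus where only $\varphi\le\a/s$ holds against the Gaussian tail $e^{-c\rho^2/s}\lesssim s\rho^{-2}$) to make your scheme rigorous. With these two repairs your route would go through and would be a legitimate, if essentially equivalent, alternative to the paper's proof.
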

\begin{proof}
By considering $\tilde g(t)=r^{-2}g(r^2t),\tilde f(t)=f(r^2t)$ and $\tilde\varphi(t)=r^2\varphi(r^2t)$, we might assume $r=1$. 
For each $x\in B_{g_0}(x_0,1)$, we define
$$\rho(x)=\sup\left\{r\in (0,1): B_{g_0}(x,r)\Subset B_{g_0}(x_0,1)\right \}$$

Let $\tilde\Lambda,S$ be constants to be determined as well as a differentiable function $\mathfrak{F}:(0,+\infty)\to (0,+\infty)$ satisfying $\lim_{t\to 0^+}\mathfrak{F}(t)=0$ and $\lim_{t\to 0^+}\mathfrak{F}'(t)=+\infty$. We will specify the choices in the proof.

We want to show that if $\tilde \Lambda$ is chosen to be large enough, $\tilde S$ is sufficiently small and $\mathfrak{F}(t)$ is chosen appropriately, then for all $(x,t)\in B_{g_0}(x_0,1)\times (0,S\wedge \tilde S]$, 
$$ \varphi(x,t)< \tilde\Lambda \left(\mathfrak{F}'(t)+\frac{1}{\rho^2(x)} \right).$$
Since $\mathfrak{F}'$ is unbounded as $t\to 0^+$, there exists $t_1\in (0,S\wedge \tilde S]$ such that the conclusion holds on $(0,t_1)$. If $t_1=S\wedge \tilde S$, then we are done. Otherwise, there exists $x_1\in B_{g_0}(x_0,1)$ so that $\varphi(x_1,t_1)=\tilde \Lambda \left(\mathfrak{F}'(t_1)+{\rho^{-2}(x_1)} \right)$.   We denote $\rho_1=\rho(x_1)$ so that for all $z\in B_{g_0}(x_1,\frac12\rho_1)$, $\rho(z)\geq \frac12 \rho_1$. In particular,  for all $(z,t)\in B_{g_0}(x_1,\frac12\rho_1)\times (0,t_1]$,
$$\varphi(z,t)\leq \tilde \Lambda \left(\mathfrak{F}'(t)+\frac{4}{\rho^2(x)} \right).$$
Thus, if we define 
$$\tilde\varphi(z,t)=e^{-4K\tilde \Lambda (\mathfrak{F}(t)+t\rho_1^{-2})} \varphi(z,t),$$
then it satisfies $\rheat \tilde\varphi\leq \mathcal{R} \tilde\varphi$ on $B_{g_0}(x_1,\frac12\rho_1)\times (0,t_1]$.

By shrinking $\tilde S$, we might assume $t_1\leq 1$. We might then apply \cite[Proposition 4.1]{LeeTam2022} to the rescaled Ricci flow $\tilde g(t)=t_1^{-1}g(t_1t),t\in [0,1]$ with $\Omega=B_{g_0}(x_0,2)=B_{\tilde g(0)}(x_0,2t_1^{-1/2})$. The same also holds if $M$ is compact, see \cite[Remark 4.1]{LeeTam2022}. By rescaling back, we obtain a Dirichlet heat kernel $G(x,t;y,s)$ on $\Omega\times\Omega\times [0,t_1]\times [0,t_1]$ to the operator $\partial_t-\Delta_{g(t)}-\mathcal{R}$ which satisfies
\begin{equation}
\label{heat-Kernel}G(x,t;y,s)\leq \frac{C_2}{(t-s)^{n/2}} \exp\left(-\frac{d_{g(s)}^2(x,y)}{C_2(t-s)} \right)
\end{equation}
for some $C_2(n,\a)>0$.  Consider the function
$$u(x,t)=\int_{\Omega} G(x,t;y,0)\varphi_0(y)\, d\mathrm{vol}_{g_0}(y)$$ 
on $\Omega\times [0,t_1]$ which satisfies $\rheat u=\mathcal{R} u$. If $\tilde S\leq \min\{1,C_2^{-1}C_1\}$, then the heat kernel estimate \eqref{heat-Kernel} shows that  for $x\in B_{g_0}(x_0,1)$ and $t\in [0,t_1]$,
\begin{equation}\label{HeatKern-bdd}
\begin{split}
u(x,t)&= \int_{\Omega}  G(x,t;y,0)\,\varphi_0(y) \, d\mathrm{vol}_{g_0}(y)\\
&\leq  \int_{B_{g_0}(x_0,2)} \varphi_0(y) \cdot \frac{C_2}{t^{n/2}}  \exp \left(-\frac{d_{g_0}^2(x,y)}{C_2t} \right)\,d\mathrm{vol}_{g_0}(y)\\
&\leq C_2C_1^{-1}\frac{f(C_2C_1^{-1}t)}{t}.
\end{split}
\end{equation}
 Thanks to the assumption of $f$, we have $f(t)\leq \Lambda (\log (t^{-1}))^{-1}$. Hence by further shrinking $\tilde S\leq \min\{C_2^{-1}C_1,1,\exp(-\Lambda C_2C_1^{-1})\}$, we have $u\leq t^{-1}$ for $t\in (0,t_1]$. By \cite[Theorem 1.1]{LeeTam2022}, we deduce that there exists $T_1(n,\a)>0$ such that if $t_1\leq \tilde S\wedge S\wedge T_1 \rho_1^2$, then
\begin{equation}
\tilde \varphi(x_1,t_1)\leq u(x_1,t_1)+ 16(t_1 \rho_1^{-2}) \cdot \rho_1^{-2}
\end{equation}

We choose
$$\mathfrak{F}(t)=C_2C_1^{-1}\int^{C_2C_1^{-1}t}_0\left( \frac{f(s)}{s}+\e s^{-1/2}\right)ds$$
for $\e>0$ so that $\mathfrak{F}'(t)\to +\infty$ as $t\to 0^+$. 
Moreover, $\lim_{t\to 0^+}\mathfrak{F}(t)=0$ follows from absolutely continuity of integrable function and assumption on $f$.

On the other hand, using assumption (i), 
$$\frac{\tilde\Lambda}{\rho_1^2} < \tilde\Lambda \left(\mathfrak{F}'(t_1)+\frac{1}{\rho^2_1} \right)\leq \frac{\a}{t_1}$$
giving $t_1\leq \a \tilde\Lambda^{-1}\rho_1^2$ and hence
\begin{equation}\label{contra-point}
\begin{split}
&\quad 4\tilde\Lambda e^{-4K\tilde \Lambda \mathfrak{F}(t_1)-4K\a}\cdot \left(\mathfrak{F}'(t_1)+\rho_1^{-2} \right)\\
&\leq \tilde \varphi(x_1,t_1)\leq u(x_1,t_1)+ 16\a \tilde\Lambda^{-1} \rho_1^{-2}\\
&\leq \mathfrak{F}'(t_1)+ 16\a \tilde\Lambda^{-1} \rho_1^{-2}\\
\end{split}
\end{equation}

By choosing $\tilde \Lambda(n,\a,K)>1$ sufficiently large so that $\tilde\Lambda^2 e^{-4\a K}>16$, we see that $t_1> S_2$ for some $S_2(K,\tilde \Lambda,C_2,C_1)>0$ which is independent of $\e\to 0$. By choosing $\tilde S=\frac12\min\{S_2,C_2^{-1}C_1,1,\exp(-\Lambda C_2C_1^{-1})\}$, we see that the conclusion holds on $B_{g_0}(x_0,1)\times (0,S\wedge \tilde S]$. This completes the proof by evaluating at $x_0$.
\end{proof}
}

We end this section by pointing out that the lowest eigenvalues of $\mathrm{Rm}$ with respect to various curvature cones satisfy the evolution equation required in Proposition~\ref{prop:MP}.  
\begin{lma}\label{lma:curvSatisfyingProp}
Let $g(t)$ be a smooth solution to the Ricci flow. Suppose $\mathcal{C}$ is one of the following curvature cone:
\begin{enumerate}
\item[(i)] non-negative curvature operator;
\item[(ii)]2-non-negative curvature operator;
\item[(iii)] weakly $\mathrm{PIC}_2$;
\item[(iv)]weakly $\mathrm{PIC}_1$;
\item[(v)] non-negative bisectional curvature in case  $g(t)$ is \K;
\item[(vi)]non-negative orthogonal bisectional curvature $\mathrm{OB}$ in case  $g(t)$ is K\"ahler;
\item[(vii)] non-negative Ricci curvature if $n=3$,
\end{enumerate}
then the function 
$$\varphi(x,t)=\inf\{s> 0: \mathrm{Rm}(x,t)+s\cdot  \mathrm{Id}\in \mathcal{C}\}$$
is non-negative continuous and satisfies $\rheat \varphi\leq \mathcal{R}\varphi+c_n\varphi^2$ in the sense of barrier for some dimensional constant $c_n\geq 0$.
\end{lma}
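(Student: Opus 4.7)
The plan is to produce, at each spacetime point $(x_0,t_0)$, a smooth lower barrier for $\varphi$ coming from a supporting hyperplane of $\mathcal{C}$, and then to derive the PDI by combining Hamilton's evolution $\partial_t\mathrm{Rm} = \Delta\mathrm{Rm} + Q(\mathrm{Rm})$ (with $Q(R) = R^{2} + R^{\#}$ in an Uhlenbeck-type frame) with the algebraic fact that each listed cone is invariant under the ODE $\dot{R} = Q(R)$. Continuity and non-negativity of $\varphi$ are automatic: closedness and convexity of $\mathcal{C}$ make $R \mapsto \inf\{s\geq 0 : R + s\mathrm{I} \in \mathcal{C}\}$ $1$-Lipschitz in $R$, and $\varphi \geq 0$ by definition.

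Fix $(x_0,t_0)$. If $\varphi_0 := \varphi(x_0,t_0) = 0$ the PDI is trivial, so assume $\varphi_0 > 0$; then $\tilde R_0 := \mathrm{Rm}(x_0,t_0) + \varphi_0 \mathrm{I} \in \partial\mathcal{C}$. Pick a nonzero supporting functional $\ell \in \mathcal{C}^{*}$ at $\tilde R_0$, normalized by $\ell(\mathrm{I}) = 1$ (possible since $\mathrm{I}$, or its K\"ahler analogue in cases (v)-(vi), lies in the interior of each listed cone). Extending $\ell$ and $\mathrm{I}$ by parallel transport in the Uhlenbeck frame produces a smooth function $\eta(x,t) := -\ell(\mathrm{Rm}(x,t))$ with $\eta(x_0,t_0) = \varphi_0$ and $\eta \leq \varphi$ locally, since $\ell$ remains a supporting functional at every nearby point. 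Thus $\eta$ is a valid smooth lower barrier for $\varphi$, and in the Uhlenbeck frame one computes $\rheat\eta = -\ell(Q(\mathrm{Rm}))$.

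The bound on $-\ell(Q(\mathrm{Rm}_0))$ comes from the quadratic expansion
\begin{equation*}
Q(\tilde R_0) = Q(\mathrm{Rm}_0) + \varphi_0\bigl(2\mathrm{Rm}_0 + \mathrm{Rm}_0\,\#\,\mathrm{I}\bigr) + \varphi_0^2\, Q(\mathrm{I}),
\end{equation*}
combined with the ODE-invariance of $\mathcal{C}$, which gives $\ell(Q(\tilde R_0)) \geq 0$ (otherwise $\tilde R_0 + \delta Q(\tilde R_0)$ would exit $\mathcal{C}$ to first order in $\delta$). Together these yield
\begin{equation*}
\rheat\eta(x_0,t_0) \;\leq\; \varphi_0\,\ell\bigl(2\mathrm{Rm}_0 + \mathrm{Rm}_0\,\#\,\mathrm{I}\bigr) + \varphi_0^{2}\,\ell(Q(\mathrm{I})).
\end{equation*}
Standard algebraic identities---$Q(\mathrm{I})$ is a positive multiple of $\mathrm{I}$, and $\ell(\mathrm{Rm}_0 \,\#\, \mathrm{I})$ matches the scalar curvature $\mathcal{R}(x_0,t_0)$ up to terms absorbable into the $\varphi_0^{2}$ piece via $\ell(\mathrm{Rm}_0) = -\varphi_0$---then deliver the desired barrier inequality $\rheat\eta \leq \mathcal{R}\varphi_0 + c_n \varphi_0^{2}$.

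The main obstacle is the case-by-case verification of $\ell(Q(\tilde R_0)) \geq 0$ at a boundary point: this is classical for (i), (ii), (vii) by Hamilton, uses the theorems of Brendle-Schoen and Nguyen (with later simplifications of Brendle and Wilking) for (iii), (iv), and Bando-Mok together with Gu for the K\"ahler cones (v), (vi). Given these inputs and the explicit algebraic identities for $Q(\mathrm{I})$ and $R \,\#\, \mathrm{I}$, the barrier PDI follows at every spacetime point, completing the proof.
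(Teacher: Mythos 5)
Your proposal is correct in outline, but it takes a different (and much more explicit) route than the paper: the paper's proof of this lemma is essentially a citation — cases (i)--(v) are quoted from Bamler--Cabezas-Rivas--Wilking and case (vi) from Li--Ni, and only case (vii) is computed directly, via Hamilton's ODE for the eigenvalues of $\Ric$ in dimension $3$ (where $\rheat\varphi=-(\mu-\nu)^2+\varphi(\mu+\nu)\leq\mathcal{R}\varphi+\varphi^2$). What you have written is, in effect, a reconstruction of the mechanism inside those cited results: the supporting-functional/Uhlenbeck-trick barrier, the expansion of $Q(R+s\mathrm{I})$, and the tangency condition $\ell(Q(\tilde R_0))\geq 0$ from ODE-invariance. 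That buys a unified treatment of all seven cones at the cost of re-proving known material; the paper's version buys brevity at the cost of hiding the argument. One point in your sketch deserves more care: the step where $\varphi_0\,\ell(2\mathrm{Rm}_0+2\,\mathrm{Rm}_0\#\mathrm{I})$ is converted into $\mathcal{R}\varphi_0+c_n\varphi_0^2$ is not a purely formal identity — it uses that membership of $\mathrm{Rm}_0+\varphi_0\mathrm{I}$ in $\mathcal{C}$ forces $\Ric(\mathrm{Rm}_0)\geq -c_n\varphi_0 g$, which holds for the cones contained in $\mathrm{C}_{\mathrm{PIC1}}$ (and for (v), (vii)) but fails for the orthogonal bisectional cone (vi), since $\mathrm{OB}\geq 0$ does not control the holomorphic sectional (hence Ricci) curvature; for that case the precise $\mathcal{R}\varphi$ coefficient really does have to be taken from Li--Ni, as the paper does. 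Also note the minor factor of $2$ missing in your cross term ($(R+s\mathrm{I})^{\#}=R^{\#}+2sR\#\mathrm{I}+s^2\mathrm{I}^{\#}$), and that for the K\"ahler cones the reference tensor must be the K\"ahler identity $B$ rather than $\mathrm{I}$, since $\mathrm{I}$ lies only on the boundary of $\mathrm{C}_{\mathrm{OB}}$ — you flag this parenthetically, and it is consistent with the paper's Theorem~\ref{thm:kahler-case}.
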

\begin{proof}
When $n\geq 4$, the case of (i)-(v) follows from \cite{BamlerCabezasWilking2019} while (vi) is proved in \cite{LiNi2020}. It remains to prove (vii) which is equivalent to (iv) when $n=3$.  By \cite{Hamilton1982}, if we use $\varphi$ to denote the negative part of the Ricci lower bound when $n=3$, then whenever $\varphi\geq 0$,
\begin{equation}
\begin{split}
\rheat \varphi&=-(\mu-\nu)^2+\varphi(\mu+\nu)
\end{split}
\end{equation}
where $\mu\geq \nu\geq -\varphi$ denotes the eigenvalues of $\Ric(x,t)$. Since $\mu+\nu-\varphi=\mathcal{R}$, results follows with $c_3=1$.
\end{proof}

\section{Volume non-collapsing along flow}

In view of Proposition~\ref{Prop:CurvEsti}, the volume non-collapsing along the flow plays an important role to avoid curvature blowup. In this section, we will show that under the mildly integrable curvature lower bound, if the initial metric is volume non-collapsed, then so does the flow for a uniform short time.  We rely heavily the idea of Simon-Topping \cite{SimonTopping2023} which considered the case when the Ricci curvature is uniformly bounded from below along the flow.

We start with a distance distortion under a weaker condition. Namely, $\Ric(g(t))\geq -\phi$ for some $\phi\in L^1([0,S])\cap C^0_{loc}((0,S])$ along the flow. In practice, it will be sufficient to take $\phi(t)=t^{-1} (-\log t)^{1+\frac12}$.
\begin{lma}\label{lma:DistanceD}
Suppose $(M^n,g(t)),t\in [0,S]$ is a smooth solution to the Ricci flow (not necessarily complete) such that for some $x_0\in M$,  we have $B_{g(t)}(x_0,5)\Subset M$ for all $t\in [0,S]$.  Then the following holds.
\begin{enumerate}
\item[(a)] If the Ricci curvature satisfies $\Ric(g(x,t))\geq -\phi(t)$ for some non-negative function $\phi(t)\in L^1([0,S])\cap C^0_{loc}((0,S])$ and for all $x\in B_{g(t)}(x_0,5)$, $t\in (0,S]$, then there exists $\tilde S>0$ depending only on $\phi$ such that for any $x,y\in B_{g(0)}(x_0,1)$ and $0\leq s<t\leq S\wedge \tilde S$,  we have 
$$d_{g(t)}(x,y)\leq\exp\left(\int^t_s \phi(z)\,dz \right) \cdot d_{g(s)}(x,y).$$
\item[(b)] If $|\Rm(x,t)|\leq \a t^{-1}$ on $B_{g(t)}(x_0,5)$ for $t\in(0,S]$, then for all $0\leq s\leq t\leq S$, 
$$B_{g(s)}(x_0,1-\b_n\sqrt{\a s})\supset B_{g(t)}(x_0,1-\b_n\sqrt{\a t}).$$
%\item[(c)]If both the assumptions in (b) and (c) hold, then
\end{enumerate}
\end{lma}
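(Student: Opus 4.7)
The two statements are instances of classical Ricci flow distance distortion, localized to a small ball; the novelty is only in the integrability of $\phi$ in part (a).

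For part (a), the plan is to use a direct length computation along a fixed curve. Fix $0\leq s<t$ and $x,y\in B_{g(0)}(x_0,1)$, and let $\gamma$ be a minimizing $g(s)$-geodesic joining $x$ and $y$. On the locus where $\Ric(g(\tau))\geq -\phi(\tau)\, g(\tau)$ holds along $\gamma$, the first-variation computation
\begin{equation*}
\frac{d}{d\tau}L_{g(\tau)}(\gamma)=-\int_\gamma \Ric(g(\tau))(T,T)\,d\sigma_{g(\tau)}\leq \phi(\tau)\,L_{g(\tau)}(\gamma)
\end{equation*}
together with Gronwall gives $L_{g(t)}(\gamma)\leq \exp\bigl(\int_s^t\phi(z)\,dz\bigr)\,d_{g(s)}(x,y)$, and the conclusion follows from $d_{g(t)}(x,y)\leq L_{g(t)}(\gamma)$. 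The only point to verify is that $\gamma$ stays in $B_{g(\tau)}(x_0,5)$ for every $\tau\in[s,t]$, so that the hypothesis $\Ric\geq -\phi$ actually applies along $\gamma$. For this I would choose $\tilde S$ so small that $\int_0^{\tilde S}\phi(z)\,dz\leq \log 2$; by the integrability hypothesis $\phi\in L^1([0,S])$, such a $\tilde S$ exists and depends only on $\phi$. A standard continuity/bootstrap argument in $t$ then shows the metrics $g(\tau)$ and $g(0)$ are $2$-bi-Lipschitz on any curve whose $g(s)$-length is bounded by $4$, keeping $\gamma$ well inside $B_{g(\tau)}(x_0,5)$.

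For part (b), I would invoke the familiar Hamilton--Perelman distance distortion estimate: under $|\Rm(g(\tau))|\leq \alpha/\tau$ on a neighborhood of a minimizing geodesic joining $x$ and $x_0$, one has the barrier inequality
\begin{equation*}
\frac{d}{d\tau}d_{g(\tau)}(x,x_0)\geq -c_n\sqrt{\alpha/\tau},
\end{equation*}
for some dimensional $c_n$. Integrating from $s$ to $t$ and noting $\int_s^t\sqrt{\alpha/\tau}\,d\tau=2\sqrt{\alpha}(\sqrt{t}-\sqrt{s})$, one obtains
\begin{equation*}
d_{g(s)}(x,x_0)\leq d_{g(t)}(x,x_0)+2c_n\sqrt{\alpha}\bigl(\sqrt{t}-\sqrt{s}\bigr).
\end{equation*}
The choice $\beta_n:=2c_n$ is then dictated by algebra: if $d_{g(t)}(x,x_0)<1-\beta_n\sqrt{\alpha t}$, then
\begin{equation*}
d_{g(s)}(x,x_0)<\bigl(1-\beta_n\sqrt{\alpha t}\bigr)+\beta_n\bigl(\sqrt{\alpha t}-\sqrt{\alpha s}\bigr)=1-\beta_n\sqrt{\alpha s},
\end{equation*}
which is exactly the claimed inclusion.

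I expect the main technical step in both parts to be the localization: Perelman's distance distortion estimate only applies when the relevant minimizing geodesic remains inside the ball where the curvature assumption is valid. For part (b) this is handled by a continuity argument in $s$ (running backward from $t$), where the shrinking radius $1-\beta_n\sqrt{\alpha s}$ is chosen precisely so that the minimizing geodesic at time $s$ stays comfortably inside $B_{g(s)}(x_0,5)$: indeed the shrinking ball has radius at most $1$, and by the same argument as in part (a) (now applied with the $L^1$ function $\phi(\tau)=n\alpha/\tau$ inside the bigger ball) the $g(s)$-distance from $x_0$ of any point in the geodesic is comparable to its $g(t)$-distance. Running the continuity argument shows the geodesic never leaves $B_{g(s)}(x_0,5)$, allowing the Perelman estimate to be applied throughout.
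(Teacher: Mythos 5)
Your overall route coincides with the paper's: part (a) is the standard first-variation-of-length estimate under a Ricci lower bound combined with a continuity argument to keep the relevant curves inside $B_{g(\tau)}(x_0,5)$ (the paper runs this by defining $S_1=\sup\{s: B_{g_0}(x_0,1)\subset B_{g(t)}(x_0,5)\}$ and shrinking $\tilde S$ so that $e^{\int_0^{\tilde S}\phi}$ is small, following Simon--Topping), and part (b) is exactly the shrinking-balls corollary of Simon--Topping, which the paper simply cites rather than reproves.

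Two points in your localization discussion need repair. First, a Ricci \emph{lower} bound only controls the growth of lengths, $\frac{d}{d\tau}L_{g(\tau)}(\gamma)\leq\phi(\tau)L_{g(\tau)}(\gamma)$; it gives no lower bound on $\frac{d}{d\tau}L$, so the metrics are not ``$2$-bi-Lipschitz'' under your hypotheses --- you only get $d_{g(t)}\leq e^{\int\phi}\,d_{g(s)}$ for $t>s$, not the reverse. Fortunately the containment you need ($\gamma\subset B_{g(\tau)}(x_0,5)$, i.e.\ an \emph{upper} bound on $d_{g(\tau)}(x_0,\cdot)$ along $\gamma$) uses only this one-sided estimate, so the argument survives once rephrased; this is precisely what the paper's $S_1$-continuity argument does. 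Second, and more seriously, your localization for part (b) invokes part (a) ``with the $L^1$ function $\phi(\tau)=n\alpha/\tau$'' --- but $\alpha/\tau$ is \emph{not} in $L^1([0,S])$, so part (a) does not apply and that step fails as written. The correct fix is that Perelman's distance estimate $\frac{d}{d\tau}d_{g(\tau)}(x_0,x)\geq -c_n\sqrt{\alpha/\tau}$ only requires the curvature bound on balls of radius $\sqrt{\tau/\alpha}$ about the two endpoints $x_0$ and $x$ (not along the whole minimizing geodesic), and the containment of these small balls in $B_{g(\tau)}(x_0,5)$ is exactly what the shrinking-balls bootstrap of Simon--Topping (their Lemma 3.2 and Corollary 3.3) provides: the conclusion at time $\tau$ furnishes the containment needed to continue the estimate to earlier times. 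With that substitution your integration $\int_s^t\sqrt{\alpha/\tau}\,d\tau=2\sqrt{\alpha}(\sqrt t-\sqrt s)$ and the choice $\beta_n=2c_n$ are correct.
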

\begin{proof}
We first prove (a) following the idea in \cite[Lemma 3.1]{SimonTopping2023}. Define 
$$S_1=\sup\{ s\in [0,S]: B_{g_0}(x_0,1)\subset B_{g(t)}(x_0,5)\}.$$
Hence, for $t\in (0,S_1]$, we have Ricci lower bound $\phi(t)$. In particular, for all $x\in B_{g_0}(x_0,1)$ and $t\in (0,S_1]$,
\begin{equation}
\begin{split}
d_{g(t)}(x,x_0)\leq \exp\left(\int^t_0 \phi(s)\,ds \right) \cdot d_{g_0}(x,x_0).
\end{split}
\end{equation}
Hence,  there exists $\tilde S(\phi)>0$ such that $S_1\geq S\wedge \tilde S$. The conclusion follows using the Ricci lower bound again. The property (b) follows directly from \cite[Corollary 3.3]{SimonTopping2023}. This completes the proof.
\end{proof}

The next result shows that the volume non-collapsing will be preserved in the  scale of $\sqrt{t}$ under the scaling invariant curvature  bound and $L^1$ (in time) Ricci lower bound.  
\begin{prop}\label{prop:volume-noncollapsed} 
Suppose $(M,g(t))$ is a Ricci flow for $t\in [0,S]$ (not necessarily complete) such that $g(0)=g_0$. If for some $x_0\in M$, $B_{g(t)}(x_0,10)\Subset M$ for all $t\in [0,S]$ and for some $v_0,\a>0$,  we have
\begin{enumerate}
\item[(i)] $v_0^{-1}r^n\geq \mathrm{Vol}_{g_0}(B_{g_0}(x,r))\geq v_0r^n$ for $r\in (0,1]$ and $x\in B_{g_0}(x_0,5)$;
\item[(ii)] $|\Rm(x,t)|\leq \a t^{-1}$ on $B_{g(t)}(x_0,10)$ for $t\in (0,S]$;
\item [(iii)] for all $x\in B_{g(t)}(x_0,10)$ and $t\in (0,S]$, the Ricci curvature satisfies
$$\Ric(g(x,t))\geq -\phi(t)$$ for some non-negative decreasing function $\phi(t)\in L^1([0,S])\cap C^0_{loc}((0,S])$.
\end{enumerate}
Then there exist $\tilde \e_0(n,v_0,\phi)$ and $\tilde S(n,v_0,\a,\phi)>0$ such that for all $t\in (0,\tilde S\wedge S]$ and $x\in B_{g(t)}(x_0,1)$,
$$\mathrm{Vol}_{g(t)}\left( B_{g(t)}(x,\sqrt{t}) \right) \geq \tilde \e_0 t^{n/2}.$$
\end{prop}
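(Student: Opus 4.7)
The plan is a blow-up and contradiction argument, adapting the approach of Simon-Topping \cite{SimonTopping2023} (where the Ricci lower bound is pointwise uniform) to our setting, where the Ricci lower bound $\Ric \geq -\phi(t)$ is only required to be integrable in time. Suppose the conclusion fails. Extracting a subsequence produces a sequence of Ricci flows $(M_k, g_k(t))_{t \in [0, S_k]}$ satisfying the hypotheses uniformly in $n, v_0, \alpha, \phi$, times $t_k \to 0$, and points $x_k \in B_{g_k(t_k)}(x_{0,k}, 1)$ with
$$\mathrm{Vol}_{g_k(t_k)}\bigl(B_{g_k(t_k)}(x_k, \sqrt{t_k})\bigr) < \varepsilon_k\, t_k^{n/2}, \qquad \varepsilon_k \to 0.$$
The parabolic rescaling $\tilde g_k(s) := t_k^{-1} g_k(t_k s)$ yields Ricci flows on $[0, S_k/t_k]$ with (a) the scale-invariant curvature bound $|\Rm_{\tilde g_k(s)}| \leq \alpha/s$, (b) a rescaled Ricci lower bound $\tilde\Ric(s) \geq -t_k \phi(t_k s)$ that vanishes uniformly on every compact subinterval of $(0, \infty)$ and has uniformly bounded $L^1(ds)$-norm by $\|\phi\|_{L^1}$, (c) Ahlfors $n$-regular initial data up to scale $t_k^{-1/2} \to \infty$, and (d) unit-scale volume collapse at $x_k$ at rescaled time $s = 1$.

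The technical core is a persistence-of-non-collapsing statement for the rescaled flow: I aim to show that for some $\kappa > 0$ independent of $k$,
$$\mathrm{Vol}_{\tilde g_k(1)}\bigl(B_{\tilde g_k(1)}(x_k, 1)\bigr) \geq \kappa,$$
which directly contradicts (d). Lemma \ref{lma:DistanceD}(a) applied to $\tilde g_k$, combined with $\int_0^{t_k}\phi \leq \|\phi\|_{L^1}$, gives uniform equivalence of $\tilde g_k(0)$- and $\tilde g_k(s)$-balls at $x_k$ of unit radius for $s \in [0, 1]$, so after distance correction the problem reduces to bounding below $\mathrm{Vol}_{\tilde g_k(1)}$ on a fixed set that has uniform $\tilde g_k(0)$-measure by (c). To transfer the initial Ahlfors regularity forward to $s = 1$ despite the non-integrable upper bound $\mathcal R \leq c_n \alpha/s$, I invoke Perelman's $\mathcal W$-functional monotonicity \cite{Perelman2002} starting from a small positive $s_0 > 0$: at time $s_0$ the rescaled flow has bounded curvature and, via the above distance comparison, Ahlfors-regular data on bounded subsets, which supplies a uniform lower bound $\mu(\tilde g_k(s_0), \tau) \geq -C$ for $\tau \in (0, 1]$; Perelman monotonicity then propagates this to $\mu(\tilde g_k(1), 1) \geq -C$, delivering the desired $\kappa$-non-collapsing at $(x_k, 1)$ at scale $1$.

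The contradiction with (d) is then immediate, completing the proof. The main obstacle is the persistence step: the scaling-invariant upper bound on scalar curvature $\mathcal R \leq c_n\alpha/s$ is not integrable down to the initial time of the rescaled flow, so a direct ODE comparison of the volume form fails; the correct tool is a monotone functional (Perelman's $\mathcal W$-functional or reduced volume) whose values along the flow can be controlled via the Ahlfors regularity of the initial data together with the $L^1$ Ricci lower bound, thereby allowing the propagation of non-collapsing without any pointwise curvature bound on the initial metric.
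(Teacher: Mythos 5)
Your setup---argue by contradiction, extract a sequence of flows with collapse at $(x_k,t_k)$, parabolically rescale by $t_k^{-1}$, and observe that the rescaled Ricci lower bound $-t_k\phi(t_k s)$ degenerates to $0$ on compact subsets of $(0,\infty)$---is exactly the paper's setup, and your identification of the core difficulty (the non-integrability of the upper bound $\mathcal{R}\leq c_n\a/s$ down to $s=0$, which blocks a direct comparison of the volume form between $s=0$ and positive times) is also correct. However, your proposed resolution via Perelman's $\mathcal W$-functional has a genuine gap. First, the flow is local and not assumed complete ($B_{g(t)}(x_0,10)\Subset M$ only), so the global $\mu$-functional, its monotonicity, and the existence of minimizers are not available; one would need a localized entropy in the spirit of \cite{Wang2020}, which you do not set up. Second, and more seriously, the step ``at time $s_0$ the rescaled flow has Ahlfors-regular data via the distance comparison'' is circular: Lemma~\ref{lma:DistanceD} controls distances, not volumes, and transferring the \emph{lower} volume bound from $s=0$ to $s=s_0$ requires integrating $\partial_s d\mathrm{vol}=-\mathcal R\, d\mathrm{vol}$ against the non-integrable bound $\mathcal R\leq c_n\a/s$ --- which is precisely the obstruction you flagged. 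The only non-collapsing actually available at time $s_0$ is the one built into the first-failure-time setup, whose constant is (a multiple of) the very $\tilde\e_0$ you are trying to rule out; a $\mu$-lower bound derived from it degenerates as $\tilde\e_0\to 0$, so monotonicity propagates only an $\tilde\e_0$-dependent non-collapsing to $s=1$ and yields no contradiction.

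The paper avoids this entirely. Its one new ingredient is the observation that, since $\phi$ is decreasing and integrable, $t\phi(t)\leq\int_t^{2t}\phi(s)\,ds\to 0$, hence $\Ric(g(t))\geq -o(1)\,t^{-1}$; this is a sufficient replacement for the uniform Ricci lower bound in \cite[Lemma 11.1]{SimonTopping2023}, whose proof is then followed essentially verbatim. The contradiction at the end does not come from entropy propagation but from the combinatorial covering argument of Simon--Topping (their Claim~1 and Lemma~11.2): the \emph{two-sided} Ahlfors regularity of $g_0$ forces any cover of a unit ball in the limit flow $g_\infty(1)$ (which has $\Ric\geq 0$ and $|\Rm|\leq\a t^{-1}$) by $r$-balls to contain at least $\eta(n)v_0^2L^nr^{-n}$ elements, and this covering lower bound is incompatible with $\mathrm{Vol}_{g_\infty(1)}(B(y_\infty,1))=\tilde\e_0$ once $\tilde\e_0$ is small. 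If you want to salvage your route, you would need an $\tilde\e_0$-independent non-collapsing estimate at some positive rescaled time, and producing that is essentially the content of the Simon--Topping lemmas you would be replacing.
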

\begin{rem}
In the presence of uniform Ricci lower bound, the Ahlfors $n$-regularity, i.e. assumption (i), follows easily from volume lower bound of $B_{g_0}(x_0,10)$ and volume comparison theorem. 
\end{rem}
\begin{proof}
The proof is almost identical to that of \cite[Lemma 11.1]{SimonTopping2023}. We only give a sketch and point out the necessary modifications. We first note that since $\phi\in L^1([0,S])$, for any $\e>0$, there exists $S_\e$ such that if $0<t<S_\e$,
\begin{equation}
\begin{split}
\e\geq \int^{2t}_{t} \phi(s) ds \geq t \phi(t).
\end{split}
\end{equation}
Hence,  as $t\to0^+$,  the Ricci lower bound behaves as
\begin{equation}\label{asym-Ric-lowerBdd}
\Ric(g(t))\geq -\phi(t)\geq -o(1)t^{-1}.
\end{equation}
This serves as the crucial replacement of the uniform Ricci lower bound assumption in \cite[Lemma 11.1]{SimonTopping2023}. 

 With this replacement in mind, we can carry out the exact same argument in the proof of \cite[Lemma 11.1]{SimonTopping2023} to obtain the following. If the conclusion fails, then for any sufficiently small $\tilde \e_0$, there exist a sequence of $n$-manifold $M_i$, a sequence of point $x_i\in M_i$, a sequence of Ricci flows $\tilde g_i(t)$ on $M_i\times [0,\tilde t_i]$ with $\tilde t_i$ decreases to $0$, $t_i\in (0,\tilde t_i]$ and a sequence of points $ y_i\in B_{\tilde g_i(t_i)}(x_i,1)$ such that 
\begin{enumerate}
\item[(i)] $B_{\tilde g_i(t)}(y_i,7)\Subset M_i$ for all $t\in [0,t_i]$;
\item[(ii)] $\Ric(\tilde g_i(t))\geq -\phi(t)$ on $B_{\tilde g_i(t)}(y_i,7)$ for all $t\in [0,t_i]$;
\item[(iii)] $|\Rm(\tilde g_i(t))|\leq \a t^{-1}$ on $B_{\tilde g_i(t)}(y_i,7)$ for all $t\in [0,t_i]$;
\item[(iv)] $v_0^{-1}r^n\geq \mathrm{Vol}_{\tilde g_i(0)}(y,r)\geq v_0r^n$ for all $r\in (0,1)$ and $y\in B_{\tilde g_i(0)}(y_i,1)$;
\item[(v)]$\mathrm{Vol}_{\tilde g_i(t_i)}(y_i,\sqrt{t_i})= \tilde \e_0 (\sqrt{t_i})^n$;
\item[(vi)] $\mathrm{Vol}_{\tilde g_i(t)}(y,\sqrt{t})\geq  2^{-n-1}\tilde \e_0 (\sqrt{t})^n$ for $t\in [0,t_i]$ and all $y\in B_{\tilde g_i(t)}\left(y_i,c_n\a^{-1}\sqrt{\frac{t_i}{\tilde t_i}}\right)$;
\end{enumerate}
We remark here that  since $t_i\to 0$, \cite[(11.15)]{SimonTopping2023} now follows from \eqref{asym-Ric-lowerBdd} when $i$ is sufficiently large. Now consider the rescaled Ricci flow $g_i(t)=t_i^{-1}\tilde g_i(t_it),t\in [0,1]$ so that the smooth sub-sequential Cheeger-Gromov limit $(M_\infty,g_\infty(t),y_\infty)$ of $(M_i,g_i(t),y_i), t\in (0,1]$ satisfies
\begin{enumerate}
\item[(I)] $\Ric(g_\infty(t))\geq 0$ for all $t\in (0,1]$;
\item[(II)] $|\Rm(g_\infty(t))|\leq \a t^{-1}$ for all $t\in (0,1]$;
\item[(III)] $\mathrm{Vol}_{g_\infty(1)}\left(B_{g_\infty(1)}(y_\infty,1)\right)= \tilde \e_0$;
\item[(IV)] $\mathrm{Vol}_{g_\infty(t)}\left( B_{g_\infty(t)}(y,\sqrt{t})\right)\geq 2^{-n-1} \tilde \e_0 t^{n/2}$ for all $t\in (0,1]$ and $y\in M_\infty$;
\item[(V)] For any $\hat R>2r>0$ and $z_i\in B_{g_i(0)}(y_i,\hat R)$, 
$$\limsup_{i\to +\infty}\mathrm{Vol}_{\tilde g_i(0)}\left( B_{\tilde g_i(0)}(z_i,r) \right)\leq v_0^{-1}r^n.$$
\end{enumerate}
Here the property (I) follows from \eqref{asym-Ric-lowerBdd} while (V) follows from the initial Ahlfors $n$-regularity assumption. Moreover, using the same proof where \cite[Lemma 3.4]{SimonTopping2023} is now replaced by Lemma~\ref{lma:DistanceD}, the {\bf Claim 1} in \cite[page 513]{SimonTopping2023} also holds for $g_\infty(1)$ in the sense that if we have a ball $B_{g_\infty(1)}(y,L)$ for $y\in M_\infty$ and $L>0$, that is covered by $N$ balls of radius $r\geq R(n,\a)$, then we must have $N\geq \eta(n)\cdot v_0^2 L^n r^{-n}$. Now we are in position to apply \cite[Lemma 11.2]{SimonTopping2023} as in the proof of \cite[Lemma 11.1]{SimonTopping2023} to derive contradiction when $\tilde \e_0$ is chosen to be too small.
\end{proof}

Finally, we show that the volume non-collapsing is indeed preserved in a large scale for a uniform short time.
\begin{prop}\label{prop:volume-noncollapsed-2}
Under the assumption of Proposition~\ref{prop:volume-noncollapsed}, there exist $\tilde v(n,v_0,\phi)$ and $\tilde S(n,v_0,\a,\phi)>0$ such that for all $t\in (0,S\wedge \tilde  S]$ and $x\in B_{g(t)}(x_0,1)$,
$$\mathrm{Vol}_{g(t)}\left( B_{g(t)}(x_0,1) \right) \geq  \tilde v.$$
\end{prop}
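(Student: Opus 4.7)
The plan is to promote the scale-$\sqrt{t}$ volume non-collapsing of Proposition~\ref{prop:volume-noncollapsed} to a uniform lower bound at scale $1$ by packing disjoint $\sqrt t$-balls inside $B_{g(t)}(x_0,1)$. By Proposition~\ref{prop:volume-noncollapsed}, after shrinking $\tilde S$ if needed, every $y\in B_{g(t)}(x_0,1)$ satisfies $\mathrm{Vol}_{g(t)}(B_{g(t)}(y,\sqrt t))\geq \tilde\e_0 t^{n/2}$. Hence it suffices to produce a family of at least $c\,t^{-n/2}$ disjoint such balls inside $B_{g(t)}(x_0,1)$, after which summation yields a uniform constant $\tilde v$ as desired.

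The key ingredient is a one-sided comparison of $g(t)$-balls and $g_0$-balls at the parabolic scale $\sqrt t$. Apply Lemma~\ref{lma:DistanceD}(b) to the parabolically rescaled flow $h(s)=\mu^2 g(s\mu^{-2})$ with $\mu^{-1}=(2+\b_n\sqrt\a)\sqrt t$, centered at an arbitrary $y\in B_{g_0}(x_0,1/2)$. The scale-invariance $|\Rm_h|\leq \a s^{-1}$ holds, and the ambient containment $B_{h(s)}(y,5)\Subset M$ for $s$ in the relevant interval follows from the hypothesis $B_{g(t)}(x_0,10)\Subset M$ together with Lemma~\ref{lma:DistanceD}(a). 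Unwinding the rescaling gives, for $t$ sufficiently small,
\[
B_{g(t)}\big(y,2\sqrt t\big)\;\subseteq\;B_{g_0}\big(y,(2+\b_n\sqrt\a)\sqrt t\big).
\]

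With this inclusion in hand, let $\mathcal Y\subseteq B_{g_0}(x_0,1/2)$ be a maximal subset for which $\{B_{g(t)}(y,\sqrt t)\}_{y\in\mathcal Y}$ are pairwise disjoint. By maximality, $B_{g_0}(x_0,1/2)\subseteq\bigcup_{y\in\mathcal Y}B_{g(t)}(y,2\sqrt t)$, so by the inclusion above and the Ahlfors upper bound from hypothesis (i),
\[
v_0 (1/2)^n\;\leq\;\mathrm{Vol}_{g_0}(B_{g_0}(x_0,1/2))\;\leq\;\sum_{y\in\mathcal Y}v_0^{-1}(2+\b_n\sqrt\a)^n\,t^{n/2},
\]
whence $|\mathcal Y|\geq c(n,v_0,\a)\,t^{-n/2}$. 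By Lemma~\ref{lma:DistanceD}(a), each $y\in\mathcal Y$ satisfies $d_{g(t)}(y,x_0)\leq e^{\int_0^t\phi}\cdot 1/2\leq 3/4$ after a further shrinking of $\tilde S$ depending on $\phi$, so $B_{g(t)}(y,\sqrt t)\subseteq B_{g(t)}(x_0,1)$. Summing the scale-$\sqrt t$ volume bounds over the disjoint family $\mathcal Y$ gives
\[
\mathrm{Vol}_{g(t)}(B_{g(t)}(x_0,1))\;\geq\;|\mathcal Y|\,\tilde\e_0\,t^{n/2}\;\geq\;c(n,v_0,\a)\,\tilde\e_0=:\tilde v,
\]
which is the desired uniform lower bound.

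The main obstacle is the ball inclusion in Step~2. A naive attempt using Lemma~\ref{lma:DistanceD}(a) only yields $d_{g(t)}\leq C d_{g_0}$, which is the wrong direction for packing. The reverse comparison at scale $\sqrt t$ cannot come from direct integration of the Ricci curvature, since $|\Rm|\leq \a/t$ is not integrable in time; instead it must be extracted from Lemma~\ref{lma:DistanceD}(b), whose proof already incorporates a Perelman-type point-picking argument. Verifying the ambient room $B_{g(t')}(y,5\mu^{-1})\Subset M$ for all intermediate $t'\in[0,t]$ uniformly in $y\in B_{g_0}(x_0,1/2)$ is the main bookkeeping check, but for $t\leq\tilde S$ small it is immediate from $B_{g(t')}(x_0,10)\Subset M$.
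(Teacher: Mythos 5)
Your proposal is correct and follows essentially the same route as the paper: both arguments pack roughly $t^{-n/2}$ disjoint $g(t)$-balls of radius comparable to $\sqrt t$ inside $B_{g(t)}(x_0,1)$, count them by transferring to $g_0$-balls via the shrinking-balls estimate of Lemma~\ref{lma:DistanceD}(b) together with the Ahlfors regularity of $g_0$, and then sum the per-ball lower bounds from Proposition~\ref{prop:volume-noncollapsed}, using Lemma~\ref{lma:DistanceD}(a) to compare unit-scale $g_0$- and $g(t)$-balls. The only difference is cosmetic: you select maximal packing centers in $B_{g_0}(x_0,1/2)$, whereas the paper takes a Vitali-type cover of $B_{g(t)}(x_0,1)$ and bounds the number of balls from below by the same $g_0$-volume comparison.
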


\begin{proof}

Since $B_{g(t)}(x_0,1)$ is compactly contained inside $M$, we might take finitely many $\{B_{g(t)}(x_i,\b_n\sqrt{\a t})\}_{i=1}^N$ such that $B_{g(t)}(x_i,\b_n\sqrt{\a t})\cap B_{g(t)}(x_j,\b_n\sqrt{\a t})=\emptyset$ for $i\neq j$ and $B_{g(t)}(x_i,\b_n\sqrt{\a t})\subseteq B_{g(t)}(x_0,1)$ while $\{   B_{g(t)}(x_i,4\b_n\sqrt{\a t})\}_{i=1}^N$ is a cover of $B_{g(t)}(x_0,1)$. Therefore,  Proposition~\ref{prop:volume-noncollapsed} implies 
\begin{equation}\label{vol-estim}
\begin{split}
\mathrm{Vol}_{g(t)}\left( B_{g(t)}(x_0,1) \right)&\geq \sum_{i=1}^N \mathrm{Vol}_{g(t)}\left( B_{g(t)}(x_i,\b_n\sqrt{\a  t}) \right)\geq N \gamma(\b_n^2\a  t)^{n/2}.
\end{split}
\end{equation}

It remains to estimate $N$ from below. By Lemma~\ref{lma:DistanceD}, we might restrict $S$ so that
\begin{equation}
\begin{split}
B_{g_0}\left(x_0,\frac12\right) \subset B_{g(t)}(x_0,1)\subset \bigcup_{i=1}^NB_{g(t)}(x_i,4\b_n\sqrt{\a  t})\subset  \bigcup_{i=1}^N B_{g_0}(x_i,5\b_n\sqrt{\a  t}).
\end{split}
\end{equation}
By measuring the set using $d\mathrm{vol}_{g_0}$ and volume growth assumption of $g_0$, we deduce that 
\begin{equation}
 C_0^{-2} v_0 \leq 5^n N \left(\b_n\sqrt{\a  t}\right)^n
\end{equation}
This completes the proof by substituting it back to \eqref{vol-estim}.
\end{proof}

\section{Ricci flow smoothing on Compact manifolds}

In this section, we establish a existence theory of Ricci flow and use it to prove the stability. 
\begin{proof}[Proof of Theorem~\ref{thm:RF-compact}]
By the short-time existence Theorem of Hamilton \cite{Hamilton1982}, there exists a maximal solution $g(t),t\in [0,T_{max})$ to the Ricci flow on $M$.  Let $\a$ to be a large constant to be determined. For the given $\a$, we let $s_0$ be the superemum of $s\in [0,T_{max})$ such that for all $(x,t)\in M\times [0,s)$,
\begin{enumerate}
\item[(i)] $|\Rm(x,t)|< \a t^{-1}$;
\item [(ii)] $\mathrm{inj}(g(x,t))> \sqrt{\a^{-1}t}$.
\end{enumerate}

By the smoothness of flow,  we have $s_0\in (0,T_{max})$.  Hence, the condition (i)-(ii) hold on $[0,s_0)$ and there exists $x_0\in M$ such that either 
\begin{enumerate}
\item[(I)]$|\Rm(x_0,s_0)|= \a s_0^{-1}$; or
\item[(II)]$\mathrm{inj}(g(x_0,s_0))= \sqrt{\a^{-1}s_0}$.
\end{enumerate}

By Lemma~\ref{lma:curvSatisfyingProp} and Lemma~\ref{lma:SKT-Morry}, $\varphi=(\mathrm{IC}_1)_-$ satisfies the assumptions in Proposition~\ref{prop:MP} with $r=\sqrt{T}$.  Hence, it applies to see that there exist $S_1(n,\a,f,T,\Lambda),\tilde\Lambda(n,\a,f,T,\Lambda)>0$ so that for all $(x,t)\in M\times (0,S_1\wedge s_0]$,
$$\varphi(x,t)\leq \tilde\Lambda \left[ \frac{f(\tilde\Lambda t)}{t}+1\right]\leq \tilde\Lambda \left[ 1+ \frac1{t}\left(\frac{\Lambda}{\log (T/\tilde\Lambda)+\log t^{-1}} \right)^2\right]$$
where we have used the fact that $f$ satisfies \eqref{SK-condition}:
\begin{equation}
\begin{split}
\Lambda \geq \int^T_t \frac{\sqrt{f(s)}}{s}ds\geq \sqrt{f(t)}\cdot \log \left(\frac{T}{t} \right).
\end{split}
\end{equation}

By taking $\phi(t)=t^{-1} (-\log t)^{1+\frac12}\in L^1([0,T])\cap C^0_{loc} ((0,T])$, we see that if we shrink $S_1$ further (depending also on $\tilde\Lambda$), then for all $(x,t)\in M\times (0,S_1\wedge s_0]$,
\begin{equation}\label{Proof"IC1}
\begin{split}
\varphi(x_0,t)\leq  \tilde\Lambda \left[ 1+ \frac1{t}\left(\frac{\Lambda}{\log (T/\tilde\Lambda)+\log t^{-1}} \right)^2\right] \leq \phi(t).
\end{split}
\end{equation}

Together with Proposition~\ref{prop:vol-SK} and assumption on the volume non-collapsing, we might apply Proposition~\ref{prop:volume-noncollapsed} to conclude that there exists $\gamma(n,f,T,\Lambda,v)>0$ such that for all $(x,t)\in M\times (0,S_1\wedge s_0]$,
\begin{equation}\label{Proof"non-collap}
\mathrm{Vol}_{g(t)}\left( B_{g(t)}(x,\sqrt{t}) \right) \geq  \gamma t^{n/2}.
\end{equation}
Moreover,  the bound of $\varphi$ at the same time implies $\Ric(x,t)\geq -(n-1)\phi(t)\geq -t^{-1}$ as $t\to 0^+$ and hence applying the volume comparison to $t^{-1}g(t)$ implies that for all $(x,t)\in M\times (0,S_1\wedge s_0]$ and $r\leq \frac14\sqrt{t}$,
$$\mathrm{Vol}_{g(t)}\left( B_{g(t)}(x,r) \right) \geq  \tilde v r^n$$
for some $\tilde v(n,f,v,D,T,\Lambda)>0$ independent of $\a$.  

Now we can apply Proposition~\ref{Prop:CurvEsti} to show that for all $(x,t)\in M\times (0,S_1\wedge  s_0]$, 
\begin{equation}\label{improved-estimate}
|\Rm(x,t)|\leq \b_0 t^{-1}
\end{equation}
for some $\b_0(n,f,v,D,T,\Lambda)>0$.  By combining \eqref{improved-estimate},  \eqref{Proof"non-collap} and injectivity radius estimate of Cheeger-Gromov-Taylor \cite{CheegerGromovTaylor1982}, we conclude that for all $(x,t)\in M\times (0,S_1\wedge s_0]$, $\mathrm{inj}(x,t)\geq \sqrt{\b_1^{-1}t}$ for some $\b_1(n,f,v,D,T,\Lambda,\b_0)>0$. Therefore,  if we fix $\a=\b_1+\b_0$, then we see that $s_0>S_1$. Hence, the conclusion holds on $(0,S_1]$ for this choice of $\a$ and the corresponding $S_1>0$.  This proves (a) and (c) while (d) follows from \eqref{Proof"IC1}. Finally (b) follows from Proposition~\ref{prop:volume-noncollapsed-2} and covering argument after possibly shrinking $S_1$ further. This completes the proof.
\end{proof}

In higher dimension, when $g_0$ is in addition K\"ahler, one usually consider the holomorphic curvature which is more complex geometric.  We say that the holomorphic bisectional curvature $\mathrm{BK}$ of a \K curvature tensor $\mathrm{R}$ is non-negative  (denoted by $R\in \mathrm{C}_{\mathrm{BK}}$) if for any $X,Y\in T^{1,0}M$, we have 
$$R(X,\bar X,Y,\bar Y)\geq 0.$$  If one instead ask for the non-negativity only for $X,Y\in T^{1,0}M$ such that $g(X,\bar Y)=0$, then we say that the orthogonal bisectional curvature $\mathrm{OB}$ is non-negative (denoted by $R\in \mathrm{C}_{\mathrm{OB}}$).  One might then analogously define the class of compact \K manifolds satisfying the strong Kato-type lower bound on $\mathrm{BK}$ (or $\mathrm{OB}$). Define for $t>0$, 
$$\kappa_{t,\mathrm{BK}}(M^n,g_0)=\sup_{x\in M} \int^t_0 \int_M H_{g_0}(x,y,s)(\mathrm{BK})_-(y)\, d\mathrm{vol}_{g_0}(y) ds$$
where $\mathrm{BK}_-(x)=\inf\{ \a\geq 0: \mathrm{Rm}_{g(x)}+\a B\in \mathrm{C}_{\mathrm{BK}} \}$ and $B$ is the \K curvature tensor given by $B(X,\bar Y,Z,\bar W)=g(X,\bar Y)g(Z,\bar W)+g(X,\bar W)g(Z,\bar Y)$.  In this case, the same proof of Theorem~\ref{thm:RF-compact} yields the following.
\begin{thm}\label{thm:kahler-case}
Suppose $(M^n,g_0,x_0)$ is a pointed compact \K manifold with $\mathrm{dim}_\C(M)=n\geq 2$.  Suppose $(M^n,g_0,x_0)\in  \mathcal{K}_{\mathrm{OB}}(n,f,v)\cap  \mathcal{K}(n,f,v)$ for some non-decreasing function $f$, $T$ and $\Lambda$ satisfying \eqref{intro:SK-condition} and $\mathrm{diam}(M,g_0)\leq D$ for some $D>0$, then there exist $S,\a,\tilde v$ depending only on $n,f,v,D,T,\Lambda$ and a solution $g(t),t\in [0,S]$ to the \KR  flow such that for all $(x,t)\in M\times (0,S]$,
\begin{enumerate}
\item[(a)] $|\Rm(x,t)|\leq \a t^{-1}$;
\item [(b)] $\mathrm{Vol}_{g(t)}\left(B_{g(t)}(x, \sqrt{T})\right)\geq \tilde v T^{n/2}$;
\item[(c)]$\mathrm{inj}(g(x,t))\geq \sqrt{\a^{-1}t}$;
\item[(d)] $\mathrm{Rm}(g(x,t))+\a (1+t^{-1}f(\a t))\cdot \mathrm{B} \in \mathrm{C}_{\mathrm{BK}}$.
\end{enumerate}
\end{thm}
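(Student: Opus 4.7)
The strategy is to mimic the proof of Theorem~\ref{thm:RF-compact} in the K\"ahler-Ricci flow setting, replacing the role of $(\mathrm{IC}_1)_-$ by $(\mathrm{OB})_-$. Since the K\"ahler condition is preserved by the Ricci flow, Hamilton's short-time existence theorem produces a maximal K\"ahler-Ricci flow $g(t)$, $t\in[0,T_{\max})$. Fix a large parameter $\a$ and define $s_0$ as the supremum of those $s$ on which both $|\Rm|<\a t^{-1}$ and $\mathrm{inj}(g(t))>\sqrt{\a^{-1}t}$ hold on $M\times[0,s)$. The goal is to show, by choosing $\a$ large relative to $n,f,v,D,T,\Lambda$, that $s_0\geq S_1$ for some uniform $S_1=S_1(n,f,v,D,T,\Lambda)>0$.

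The key step is to apply the local maximum principle Proposition~\ref{prop:MP} to $\varphi=(\mathrm{OB})_-$. Lemma~\ref{lma:curvSatisfyingProp}(vi) supplies the required evolution inequality in the barrier sense under the K\"ahler-Ricci flow. The initial Morrey bound required as hypothesis (iii) of Proposition~\ref{prop:MP} is obtained by a verbatim repetition of the proof of Lemma~\ref{lma:SKT-Morry} with $(\mathrm{OB})_-$ in place of $(\mathrm{IC}_1)_-$: the Kato control on $(\mathrm{OB})_-$ is supplied by $\mathcal{K}_{\mathrm{OB}}(n,f,v)$, while the Gaussian heat-kernel bound of Proposition~\ref{prop:heat-Kato} and the Ahlfors $n$-regularity of Proposition~\ref{prop:vol-SK} that are needed to convert the heat-kernel integral to a Gaussian integral are furnished by the auxiliary hypothesis $(M,g_0)\in\mathcal{K}(n,f,v)$. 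This yields a pointwise bound $(\mathrm{OB})_-(x,t)\leq\phi(t)$ along the flow, with the same explicit $L^1$ function $\phi$ as in the proof of Theorem~\ref{thm:RF-compact}; conclusion (d) is an immediate reformulation of this bound in cone form.

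To upgrade to a curvature estimate via Proposition~\ref{Prop:CurvEsti}(iii) I still need the scaling-invariant volume non-collapsing, and Proposition~\ref{prop:volume-noncollapsed} supplies this provided a Ricci lower bound $\Ric\geq-\phi(t)$ with $\phi\in L^1$ holds along the flow. This is where the main difficulty arises: in the Riemannian case the bound $\Ric_-\leq(n-1)(\mathrm{IC}_1)_-$ delivers the requisite Ricci bound for free once $(\mathrm{IC}_1)_-$ is controlled, but in higher complex dimension a bound on $(\mathrm{OB})_-$ does \emph{not} control $\Ric_-$. The plan is to run Proposition~\ref{prop:MP} a second time, now on $\varphi=(\Ric)_-$, using the fact that under the K\"ahler-Ricci flow the evolution of the Ricci tensor can be written in the admissible form $(\partial_t-\Delta)\varphi\leq\mathcal{R}\varphi+K\varphi^2$ once $(\mathrm{OB})_-$ has been controlled by the first application; the initial Morrey hypothesis for this second run is supplied directly by Lemma~\ref{lma:SKT-Morry} thanks to $(M,g_0)\in\mathcal{K}(n,f,v)$.

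With both the OB bound and the Ricci lower bound in hand, the remainder is routine and parallels Theorem~\ref{thm:RF-compact}: Proposition~\ref{Prop:CurvEsti}(iii) yields $|\Rm|\leq\b_0t^{-1}$, which is conclusion (a); the Cheeger-Gromov-Taylor injectivity-radius estimate \cite{CheegerGromovTaylor1982} delivers conclusion (c); Proposition~\ref{prop:volume-noncollapsed-2} combined with a covering argument gives conclusion (b); and the continuity loop is closed by choosing $\a=\b_0+\b_1$ sufficiently large, forcing $s_0\geq S_1$. The main obstacle, as emphasized, is the propagation of the Ricci lower bound along the K\"ahler-Ricci flow, which is precisely the reason $\mathcal{K}(n,f,v)$ is imposed in addition to $\mathcal{K}_{\mathrm{OB}}(n,f,v)$ in the hypotheses.
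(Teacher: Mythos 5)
Your overall architecture is exactly what the paper intends: the paper's own ``proof'' of Theorem~\ref{thm:kahler-case} is literally the sentence ``the same proof of Theorem~\ref{thm:RF-compact} yields the following,'' and you correctly reproduce the bootstrap scheme, correctly use Lemma~\ref{lma:curvSatisfyingProp}(vi) for $(\OB)_-$, and correctly identify why the auxiliary hypothesis $\mathcal{K}(n,f,v)$ is imposed (heat-kernel bounds and Ahlfors regularity from Propositions~\ref{prop:heat-Kato} and \ref{prop:vol-SK} are stated for $\mathcal{K}(n,f)$, and a bound on $(\OB)_-$ does not dominate $\Ric_-$ in complex dimension $\geq 2$). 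You have in fact put your finger on the one step the paper glosses over. The problem is that your proposed repair of that step does not work as stated. Applying Proposition~\ref{prop:MP} to $\varphi=(\Ric)_-$ requires the barrier inequality $\rheat\varphi\leq \mathcal{R}\varphi+K\varphi^2$, and this is precisely what Lemma~\ref{lma:curvSatisfyingProp} does \emph{not} supply for \K manifolds of complex dimension $\geq 2$: case (vii) is restricted to real dimension $3$ exactly because nonnegativity of Ricci is not preserved by the \KR flow in higher dimensions. Your claim that the inequality holds ``once $(\OB)_-$ has been controlled'' is not substantiated, and the natural computation does not give it: diagonalizing $\Ric$ with eigenvalues $\lambda_1\leq\cdots\leq\lambda_n$, the reaction term for $\lambda_1$ under the \KR flow is $\sum_{k\geq 2}R_{1\bar 1 k\bar k}(\lambda_k-\lambda_1)\geq -(\OB)_-\cdot\bigl(c\,\mathcal{R}+n(\Ric)_-\bigr)$, so the best one gets is $\rheat\varphi\leq \sigma(t)\bigl(c\,\mathcal{R}+n\varphi\bigr)$ with $\sigma=(\OB)_-$. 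This is not of the admissible form $\mathcal{R}\varphi+K\varphi^2$ (the right-hand side need not be small where $\varphi$ is small), and the inhomogeneous term $\sigma\mathcal{R}\lesssim \phi(t)\cdot\a t^{-1}$ is not integrable in time, so it cannot simply be absorbed by a Gronwall argument either. As written, the propagation of the Ricci lower bound --- which you correctly identify as the input needed for Proposition~\ref{prop:volume-noncollapsed} --- remains unproved.

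A secondary point: conclusion (d) asserts $\Rm(g(t))+\a(1+t^{-1}f(\a t))\cdot B\in\mathrm{C}_{\mathrm{BK}}$, whereas your maximum-principle argument only controls $(\OB)_-$, i.e.\ membership in the strictly larger cone $\mathrm{C}_{\mathrm{OB}}$; a lower bound on orthogonal bisectional curvature gives no lower bound on holomorphic sectional curvature, so (d) is not ``an immediate reformulation'' of your estimate. Either this is a typo in the statement (with $\mathrm{C}_{\mathrm{OB}}$ intended) or the argument must be run for $(\mathrm{BK})_-$ via Lemma~\ref{lma:curvSatisfyingProp}(v), which would require a Kato hypothesis on $(\mathrm{BK})_-$ rather than on $(\OB)_-$. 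You should flag which reading you are proving.
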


When a sequence of compact manifold $(M_i^n,g_i)$ satisfy a uniform Dynkin-type lower bound of Ricci, it is already shown by Carron \cite{Carron2019} that the sequence admits a Gromov-Hausdorff limit after passing to subsequence.  The structure theory was further investigated by Carron-Mondello-Tewodrose \cite{CarronMondelloTewodrose2021,
CarronMondelloTewodrose2022} in the non-collapsed case when the Dynkin-type lower bound of Ricci is strengthened to be a strong Kato-type lower bound.  We now shows that in dimension three,  the limit will be Gromov-Hausdorff close and homeomorphic to a smooth manifold. Moreover, the result also holds in higher dimension if we require the strong Kato bound on $1$-isotropic curvature.

\begin{proof}[Proof of Corollary~\ref{cor:RF-fromSing}]
The proof is almost identical to that of \cite[Corollary 4]{BamlerCabezasWilking2019}. We include it for reader's convenience.
By Theorem~\ref{thm:RF-compact},  there exist $\a,S>0$ and $\phi\in L^1([0,S])\cap C^0_{loc}((0,S))$ such that for each $i\in \mathbb{N}$, we can find a Ricci flow $g_i(t),t\in [0,S]$ such that $g_i(0)=g_{i,0}$ and for all $t\in (0,S]$,
\begin{enumerate}
\item[(a)] $|\Rm(g_i(t))|\leq \a t^{-1}$;
\item[(b)]$\mathrm{inj}(g_i(t))\geq \sqrt{\a^{-1}t}$;
\item[(c)] $\mathrm{IC}_1(g_i(t))\geq -\phi(t)$.
\end{enumerate}

It follows from Hamilton's compactness \cite{Hamilton1995} that after passing to subsequence, there exists a smooth Ricci flow $(M_\infty,g_\infty(t),x_\infty),t\in (0,S]$ such that $(M_i,g_i(t),x_i)$ converges to $(M_\infty,g_\infty(t),x_\infty)$ in the $C^\infty$ Cheeger-Gromov sense.  By Lemma~\ref{lma:DistanceD},  for all $i$ and $0<s\leq t\leq S$
$$\exp\left(-\int^t_s \phi(z)\,dz \right) \cdot d_{g_i(t)}\leq d_{g_i(s)}\leq d_{g_i(t)}+C(\sqrt{t}-\sqrt{s}).$$

If we first let $i\to +\infty$ and followed by $s\to 0$, we see that $M_\infty$ is compact since $\mathrm{diam}(M_i,g_{i,0})\leq D$ and $d_{g_\infty(s)}\to d_\infty$ for some distance function $d_\infty$ on $M_\infty$.  By interchanging the order of taking limit,  it is easy to see that $d_{g_{i,0}}$ converges to $d_\infty$ modulus diffeomorphism. This proved the Gromov-Hausdorff convergence.  It follows from \cite[Theorem A]{CarronMondelloTewodrose2021} that the convergence is also in the measured Gromov-Hausdorff sense.

Suppose now $(M_i,g_{i,0},x_i)\in \mathcal{K}_{\mathrm{IC1}}(n,f_i,v)$ so that $f_i(T)$ decreases to $0$.  We want to show that $g_\infty(t)$ satisfies a stronger curvature estimate. 

Fix any $x_i\in M$. If $r\geq D$ where $\mathrm{diam}(M_i,g_{i,0})\leq D$, then for all $x\in B_{g_{i,0}}(x_i,r)=M_i$ and $t\in (0,r^2\wedge T]$, 
\begin{equation}
\begin{split}
&\quad \int_{B_{g_{i,0}}(x_i,2r)} \frac1{t^{n/2}} \exp\left(-\frac{C_1}{t} d^2_{g_{i,0}}(x,y) \right)  \varphi_{i,0}(y) \, d\mathrm{vol}_{g_{i,0}}(y) \\
&=\int_{M_i} \frac1{t^{n/2}} \exp\left(-\frac{C_1}{t} d^2_{g_{i,0}}(x,y) \right)  \varphi_{i,0}(y) \, d\mathrm{vol}_{g_{i,0}}(y) \leq \frac{C_1f_i(t)}t
\end{split}
\end{equation}
by Lemma~\ref{lma:SKT-Morry} and assumption.  While if $t\in [T,r^2]$, then the diameter bound yields
\begin{equation}
\begin{split}
&\quad \int_{B_{g_{i,0}}(x_i,2r)} \frac1{t^{n/2}} \exp\left(-\frac{C_1}{t} d^2_{g_{i,0}}(x,y) \right)  \varphi_{i,0}(y) \, d\mathrm{vol}_{g_{i,0}}(y) \\
&\leq e^{C_1D^2(T^{-1}-t^{-1})}\left(\frac{T}{t} \right)^{n/2}\int_{M_i}\frac1{T^{n/2}} \exp\left(-\frac{C_1}{T} d^2_{g_{i,0}}(x,y) \right)  \varphi_{i,0}(y) \, d\mathrm{vol}_{g_{i,0}} (y)\\
&\leq C_1 e^{C_1D^2(T^{-1}-t^{-1})} \frac{f_i(T)}T.
\end{split}
\end{equation}

For $t\in [T,r^2]$, we extend $f_i(t)$ by defining 
$$f_i(t)=t\cdot e^{C_1D^2(T^{-1}-t^{-1})} \frac{f_i(T)}T.$$
Since $f_i(T)\to 0$ as $i\to +\infty$,  $f_i$ satisfies the assumption (iii) in Proposition~\ref{prop:MP} for $r\to +\infty$ in the sense that for any $r\geq D$,  there exists $N\in \mathbb{N}$ so that for all $i>N$,  Proposition~\ref{prop:MP} applies on $B_{g_0}(x_i,r)$ so that 
$$(\mathrm{IC}_1)_-(g_i(x_i,t))\leq \tilde\Lambda\left(\frac{f_i(\tilde \Lambda t)}{t}+\frac1{r^2}\right)$$
for some uniform $\tilde \Lambda>0$.  Since $x_i$ is arbitrary in $M_i$, by letting $i\to +\infty$ for $t\in (0,S]$ and followed by letting $r\to +\infty$, we see that $\mathrm{Rm}(g_\infty(t))\in \mathrm{C}_{\mathrm{PIC1}}$ on $M_\infty$ for $t\in (0,S]$.  This completes the proof.
\end{proof}

As another application, we obtain a stability in view of Geroch conjecture. This is motivated by the work of \cite{AllenBrydenKazaras2022} where compactness under uniform $L^p$ curvature bound for $p=3$ is studied when $n=3$.  Since a $L^p,p>\frac32$ lower bound on $\Ric$ implies the uniform strong Kato bound of $\Ric$ \cite{RoseStollmann2017}, together with the regularity theory of Ricci flow (for instances, see \cite{Yang1992-1,Yang1992-2}), we can view Corollary~\ref{cor:LimitinPSC} as a generalization. 
\begin{proof}[Proof of Corollary~\ref{cor:LimitinPSC}]
By Corollary~\ref{cor:RF-fromSing} it suffices to identify the Gromov-Hausdorff limit. Recall that there exists $(M_\infty,g_\infty(t),x_\infty),t\in (0,S]$ such that the Ricci flow $(M_i^n,g_{i}(t),x_i)$ with $g_i(0)=g_{i,0}$  converges to $(M_\infty,g_\infty(t),x_\infty)$ in the smooth Cheeger-Gromov sense after passing to subsequent.  By maximum principle, we have $\mathcal{R}(g_i(t))\geq -i^{-1}$ for all $i$ and hence $\mathcal{R}(g_\infty(t))\geq 0$.  Since $M_\infty$ is compact, the smooth convergence implies that $M_\infty$ is diffeomorphic to $M_i$ for $i$ sufficiently large. In particular, $\sigma(M_\infty)\leq 0$ and hence $g_\infty(t)$ is Ricci flat for $t\in (0,S]$ by strong maximum principle.  Thus, $g_\infty=g_\infty(S)$ is desired limit. 

If $b_1(M_i)=n$ for all $i$,  we also have $b_1(M_\infty)=b_1(M_i)=n$ for $i$ large. Since $g_\infty(t)$ is of Ricci flat,  it is the standard torus by the classical Bochner theorem. This completes the proof.
\end{proof}

\end{document}